\documentclass[reqno]{amsart}

% Import of packages
%\usepackage[a4paper, hmargin={2.8cm, 2.8cm}, vmargin={2.5cm, 2.5cm}]{geometry}  % Controls margins    %
\usepackage[danish,english]{babel} % Danish and English hyphenation etc.
\usepackage[latin1]{inputenc} % Direct input of special characters like æÆøØåÅ
\usepackage[T1]{fontenc} % Hyphenation also with non english characters
\usepackage{lmodern}
\usepackage{amsmath,amsthm,amssymb,amsfonts,mathrsfs,latexsym} % Better mathematics
\usepackage{graphicx} % Include external graphics files
\usepackage{verbatim} % Allows longer comments
\usepackage[all]{xy} % for diagrams
\usepackage[numbers]{natbib}
\usepackage{enumerate}
\usepackage{hyperref}

\numberwithin{equation}{section}

\usepackage[shortlabels]{enumitem}
\setenumerate[0]{{\upshape(1)}}

%\usepackage{syntonly} % compiles without writing - to detect errors
%\syntaxonly
%\usepackage[draft]{fixme} % draft or final
%\usepackage[notref,notcite]{showkeys} % prints labels in margin. Options: notref, notcite
%\usepackage{showidx}

%\usepackage{fancyhdr} % Fancy headers
%\pagestyle{fancy}

%\hyphenpenalty=10000 % No hyphenation
%\exhyphenpenalty=10000

% Definition of theorems, corollaries and the like
\newtheorem{thm}{Theorem}[section]
\newtheorem{lem}[thm]{Lemma}
\newtheorem{prop}[thm]{Proposition}
\newtheorem{cor}[thm]{Corollary}
\newtheorem*{thm*}{Theorem}
\theoremstyle{definition}
\newtheorem{defi}[thm]{Definition}

\newtheorem{rem}[thm]{Remark}

\newcommand{\mc}[1]{\mathcal{#1}}

\newcommand{\mb}[1]{\mathbb{#1}}

\newcommand{\us}[1]{\upshape{#1}}

\newcommand{\C}{\mathbb{C}}
\newcommand{\N}{\mathbb{N}}

\newcommand{\R}{\mathbb{R}}
\newcommand{\Z}{\mathbb{Z}}

\newcommand{\la}{\langle}
\newcommand{\ra}{\rangle}
\renewcommand{\epsilon}{\varepsilon}
\let\tempphi\phi
\let\phi\varphi
\let\varphi\tempphi
\renewcommand{\bar}{\overline}

\newcommand{\Cs}{\ensuremath{C^*}}
\newcommand{\F}{\mb F}

%\parindent 0cm
%\parskip 11pt

% Only used with Fancy headers - remember to activate package above
%\lhead{Fag}
%\chead{Titel}
%\rhead{S\o ren Knudby}
%\lfoot{}
%\cfoot{\thepage}
%\rfoot{}

\makeatletter
\def\blfootnote{\xdef\@thefnmark{}\@footnotetext}
\makeatother

% Documentpart begin
\begin{document}
\selectlanguage{english} % Select either of the sets loaded with babel

%%%%%%%%% Title and abstract %%%%%%%%%

\title{A L\'evy-Khinchin formula for free groups}
\author{Uffe Haagerup}
\thanks{Both authors are supported by ERC Advanced Grant no.~OAFPG 247321 and the Danish National Research Foundation through the Centre for Symmetry and Deformation (DNRF92). The first author is also supported by the Danish Natural Science Research Council.}
\address{Department of Mathematical Sciences, University of Copenhagen,
\newline Universitetsparken 5, DK-2100 Copenhagen \O, Denmark}
\email{haagerup@math.ku.dk}

\author{S{\o}ren Knudby}
\address{Department of Mathematical Sciences, University of Copenhagen,
\newline Universitetsparken 5, DK-2100 Copenhagen \O, Denmark}
\email{knudby@math.ku.dk}
\date{\today}

\begin{abstract}
We find a L\'evy-Khinchin formula for radial functions on free groups. As a corollary we obtain a linear bound on the growth of radial, conditionally negative definite functions on free groups of two or more generators.
\end{abstract}

\maketitle

%\blfootnote{Supported by the Danish National Research Foundation through the Centre for Symmetry and Deformation (DNRF92)}

%%%%%%%%%% Text starts here %%%%%%%%%%

\section{Introduction}

\noindent A function $\phi$ defined on a group $G$ is called symmetric, if $\phi(x) = \phi(x^{-1})$ for every $x\in G$. It is an elementary application of Bochner's Theorem that a symmetric function $\phi:\Z\to\C$ is positive definite, if and only if
\begin{align}\label{eq:bochner1}
\phi(n) = \int_0^\pi \cos n\theta \ d\mu(\theta), \qquad n\in\Z,
\end{align}
for a positive, finite Borel measure $\mu$ on $[0,\pi]$. Moreover, $\mu$ is uniquely determined by $\phi$, and $\phi(0) = \mu([0,\pi])$. Similarly, it is a simple consequence of L\'evy-Khinchin's formula for real valued functions in the form of \cite[Corollary 18.20]{MR0481057} that a symmetric function $\psi:\Z\to\C$ with $\psi(0) = 0$ is conditionally negative definite, if and only if
\begin{align}
\psi(n) = cn^2 + \int_0^\pi (1 - \cos n\theta) \ d\rho(\theta), \qquad n\in\Z,
\end{align}
for a constant $c\geq 0$ and a Borel measure $\rho$ on $]0,\pi]$ for which
$$
\int_0^\pi (1 - \cos\theta) \ d\rho(\theta) < \infty.
$$
Moreover, $c$ and $\rho$ are uniquely determined by $\psi$. If we put
$$
\nu = c\delta_0 + (1-\cos\theta) \ d\rho(\theta),
$$
then $\nu$ is a finite Borel measure on $[0,\pi]$ uniquely determined by $\psi$, and
\begin{align}\label{eq:levy1}
\psi(n) = \int_0^\pi \frac{1- \cos n\theta}{1 - \cos\theta}\ d\nu(\theta), \qquad n\in\Z,
\end{align}
where the integrand for $\theta = 0$ should be replaced by
$$
n^2 = \lim_{\theta\to0} \frac{1- \cos n\theta}{1 - \cos\theta}.
$$
Moreover, $\psi(1) = \nu([0,\pi])$. Changing the variable $\theta$ to $s = \cos\theta$, the equations \eqref{eq:bochner1} and \eqref{eq:levy1} can be rewritten as
\begin{align}\label{eq:bochner2}
\phi(n) = \int_{-1}^1 T_n(s) \ d\mu'(s), \qquad n\geq 0,
\end{align}
and
\begin{align}\label{eq:levy2}
\psi(n) = \int_{-1}^1 \frac{1-T_n(s)}{1-s}\ d\nu'(s), \qquad n\geq 0,
\end{align}
where $\mu'$ and $\nu'$ are the image measures of $\mu$ and $\nu$ under the map $\theta\mapsto\cos\theta$, $(T_n)_{n=0}^\infty$ are the Chebyshev polynomials of first kind, and where the integrand in \eqref{eq:levy2} for $s = 1$ should be replaced by
$$
\lim_{s\to 1} \frac{1-T_n(s)}{1-s} = T'_n(1) = n^2.
$$
In this paper we will prove formulas analogous to \eqref{eq:bochner2} and \eqref{eq:levy2} for radial functions on free groups of two or more generators, i.e. functions on $\F_r$ ($2\leq r \leq \infty$) which only depend on the word length $|x|$ of an element $x\in\F_r$.

In \cite{MR665019}, Fig\`a-Talamanca and Picardello introduced the notion of spherical functions on the free groups $\F_r$ ($2\leq r < \infty$). They are complex, radial functions $\phi$ on $\F_r$, i.e. of the form
$$
\phi(x) = \dot\phi(|x|), \qquad x\in\F_r,
$$
for a unique function $\dot\phi:\N_0\to\C$, originally indexed by a parameter $z\in\C$. In this paper we will use $s = \dot\phi(1) = \frac{q}{q+1}(q^{-z} + q^{z-1})$ as the parameter, where $q = 2r - 1$. We will show in Section~\ref{sec:spherical} that with this parametrization the spherical functions $(\phi_s)_{s\in\C}$ are given by
$$
\dot\phi_s(n) = \left[ \frac{2}{q+1}\ T_n \left( \frac{q+1}{2\sqrt{q}}\ s \right) + \frac{q-1}{q+1}\ U_n \left( \frac{q+1}{2\sqrt{q}}\ s \right) \right] q^{-n/2}, \quad n\in\N_0,
$$
where $(T_n)_{n=0}^\infty$ and $(U_n)_{n=0}^\infty$ are the Chebyshev polynomials of first and second kind, respectively. For $r = \infty$ we follow the convention of \cite{HSS} and define the spherical functions on $\F_\infty$ by $\phi_s(x) = s^{|x|}$. We then prove in Section~\ref{sec:posdef} and \ref{sec:negdef} the following results analogous to \eqref{eq:bochner2} and \eqref{eq:levy2}. Theorem~\ref{thm:posdef} can be found several places in the literature in the case where $r$ is finite, see \cite{MR563948}, \cite{MR0338272}, \cite{MR658507}. For completeness, in Section~\ref{sec:posdef} we include a proof also for the case where $r$ is finite.

\begin{thm}\label{thm:posdef}
Let $2\leq r \leq \infty$ and let $\phi:\F_r\to\C$ be a radial function. The following are equivalent.
\begin{enumerate}
	\item The function $\phi$ is positive definite.
	\item There is a finite positive Borel measure $\mu$ on $[-1,1]$ such that
	$$
	\phi(x) = \int_{-1}^1 \phi_s(x)\ d\mu(s), \qquad x\in\F_r.
	$$
\end{enumerate}
Moreover, if {\us(2)} holds, then $\mu$ is uniquely determined by $\phi$, and $\phi(e) = \mu([-1,1])$.
\end{thm}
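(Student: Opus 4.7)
The plan is as follows.

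For $(2) \Rightarrow (1)$: since an integral of positive definite functions against a positive measure is positive definite, it suffices to verify that each $\phi_s$, $s \in [-1, 1]$, is positive definite on $\F_r$. For $r$ finite this is classical and can be seen by realising $\phi_s$ as a diagonal matrix coefficient of a unitary representation of $\F_r$: the (unramified) principal series covers $|s| \leq \tfrac{2\sqrt{q}}{q+1}$, $q = 2r-1$; the complementary series handles the remaining $|s| < 1$; and the trivial and sign characters account for $s = \pm 1$. For $r = \infty$, $\phi_s(x) = s^{|x|}$ is positive definite for $s \in [0, 1]$ by the Haagerup property of $\F_\infty$, and then for $s \in [-1, 0]$ via multiplication by the character $x \mapsto (-1)^{|x|}$.

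For $(1) \Rightarrow (2)$ I shall set up a moment problem. From the explicit formula, $P_n(s) := \dot\phi_s(n)$ is a polynomial of degree $n$ in $s$, so $\{P_n\}_{n\geq 0}$ is a basis of $\C[s]$. For $r$ finite, the $P_n$ satisfy the three-term recursion
\[
P_{n+1}(s) = \tfrac{2r}{2r-1}\, s\, P_n(s) - \tfrac{1}{2r-1}\, P_{n-1}(s), \qquad P_0 = 1,\ P_1 = s,
\]
reflecting the radial convolution identity $\chi_{S_1} * \chi_{S_n} = (2r-1)\chi_{S_{n-1}} + \chi_{S_{n+1}}$, where $S_n = \{x \in \F_r : |x| = n\}$; for $r = \infty$ we simply have $P_n(s) = s^n$. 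Define $L \colon \C[s] \to \C$ linearly by $L(P_n) = \dot\phi(n)$. The goal is to show that $L$ extends to a positive functional on $C([-1, 1])$, so that the Riesz representation theorem yields the required measure $\mu$.

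The crucial ingredient is the product formula
\[
P_n(s) P_m(s) = \sum_k c_{n, m}^k\, P_k(s), \qquad c_{n, m}^k \geq 0,
\]
encoding the spherical functional equation $\frac{1}{|S_m|}\sum_{|y|=m}\phi_s(xy) = \phi_s(x)\dot\phi_s(m)$. For $r$ finite, $c_{n, m}^k = |\{(x, y) \in S_n \times S_m : |x^{-1}y| = k\}|/(|S_n||S_m|)$; for $r = \infty$ it degenerates to $P_n P_m = P_{n+m}$. It follows that for any $Q = \sum_n a_n P_n$,
\[
L(|Q|^2) = \sum_{n, m} \overline{a_n}\, a_m \sum_k c_{n, m}^k\, \dot\phi(k) = \sum_{x, y \in \F_r} \overline{f(x)}\, f(y)\, \phi(x^{-1}y) \geq 0,
\]
where $f = \sum_n a_n \chi_{S_n}/|S_n|$ for $r$ finite, and for $r = \infty$ an analogous argument using pairwise disjoint length-$n$ words in $\F_\infty$ (with a limit argument to suppress diagonal contributions) yields the Hankel positivity $\sum \overline{a_n} a_m \dot\phi(n+m) \geq 0$.

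To localise the representing measure in $[-1, 1]$ I combine the bound $|\dot\phi(n)| \leq \phi(e)$ (from positive definiteness of $\phi$) with the growth estimate $|P_n(s)| \sim |s|^n$ for $|s| > 1$: any mass outside $[-1, 1]$ would render $\dot\phi(n)$ unbounded. Uniqueness of $\mu$ follows from density of polynomials in $C([-1, 1])$, and $\phi(e) = \mu([-1, 1])$ is immediate from $n = 0$. The main obstacle I anticipate is the positive-definiteness claim for $\phi_s$ throughout the complementary range $|s| \in (\tfrac{2\sqrt{q}}{q+1}, 1)$; although classical, it rests on delicate harmonic analysis on the regular tree.
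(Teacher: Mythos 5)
Your proposal is correct in outline, but it proves the theorem by a genuinely different route than the paper. The paper's proof of (1)$\implies$(2) for finite $r$ is \Cs-algebraic: the positive definite $\phi$ (normalized at $e$) extends to a state $\Phi$ on $C^*(\F_r)$, the radial subalgebra $C^*(\mu_1)$ is identified with $C([-1,1])$ by spectral theory (here $\|\mu_1\|\leq 1$ gives $\sigma(\mu_1)\subseteq[-1,1]$, and the spherical states $\Phi_s$ give the reverse inclusion), and the measure is produced by restricting $\Phi$ and applying Riesz representation; the support in $[-1,1]$ then comes for free. You instead run a classical moment-problem argument: nonnegativity of the linearization coefficients in $P_nP_m=\sum_k c_{n,m}^k P_k$ (your formula for $c_{n,m}^k$ is correct, being the coefficients of $\mu_n*\mu_m$ in the $\mu_k$'s) turns positive definiteness of $\phi$, tested on the radial function $f=\sum_n a_n\mu_n$, into Hankel positivity of the functional $L$, after which you must localize the Hamburger measure to $[-1,1]$ by hand, using $|\dot\phi(n)|\leq\phi(e)$, the bound $|P_n|\leq 1$ on $[-1,1]$, and the geometric growth of the even-index $P_{2n}$ off $[-1,1]$ (the even-index restriction is needed to avoid sign cancellation at $s<-1$); this step is standard but is exactly what the \Cs-argument renders automatic. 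For $r=\infty$ the divergence is sharper: the paper restricts $\phi$ to the chain $\F_2\subseteq\F_3\subseteq\cdots$, applies the finite case, and passes to a vague cluster point using a uniform-convergence lemma for $\dot\phi_{s,r}(n)$, whereas your disjoint-words construction (words of each length on pairwise distinct generators, replicated and averaged so the diagonal terms vanish in the limit) yields $\sum_{n,m}\bar a_n a_m\dot\phi(n+m)\geq 0$ directly, which together with boundedness is precisely the characterization of moments of measures on $[-1,1]$ that the paper itself points to in its remark citing \cite[Proposition 4.9]{BCR}; this is arguably more self-contained at $r=\infty$, at the cost of redoing the localization. Both your proof and the paper's take the positive definiteness of $\phi_s$ for $s\in[-1,1]$, $r<\infty$, as classical input for (2)$\implies$(1) (the paper cites \cite{MR1152801} and \cite{HSS}, you invoke the principal and complementary series), so no gap arises there; just be aware that your localization step also quietly uses $\sup_{s\in[-1,1]}|P_n(s)|\leq 1$, which you should either derive from that same classical input or check directly from \eqref{eq:pos-spherical}.
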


\begin{thm}\label{thm:negdef}
Let $2\leq r \leq \infty$ and let $\psi:\F_r\to\C$ be a radial function with $\psi(e) = 0$. The following are equivalent.
\begin{enumerate}
	\item The function $\psi$ is conditionally negative definite.
	\item There is a finite positive Borel measure $\nu$ on $[-1,1]$ such that
	$$
	\psi(x) = \int_{-1}^1 \psi_s(x) \ d\nu(s), \qquad x\in\F_r,
	$$
	where
	$$
	\psi_s(x) = \frac{1 - \phi_s(x)}{1-s}, \quad s\in\C\setminus\{1\},
	$$
	and
	$$
	\psi_1(x) = \lim_{s\to 1} \frac{1-\phi_s(x)}{1-s}.
	$$
\end{enumerate}
Moreover, if {\us(2)} holds, then $\nu$ is uniquely determined by $\psi$, and $\nu([-1,1]) = \psi(x)$, when $|x| = 1$.
\end{thm}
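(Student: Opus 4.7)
My plan has three parts: prove (2)$\Rightarrow$(1), then (1)$\Rightarrow$(2), and finally the uniqueness and the mass formula.

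For (2)$\Rightarrow$(1), I would first observe that applying Theorem~\ref{thm:posdef} with $\mu = \delta_s$ for $s \in [-1,1]$ shows that each $\phi_s$ is positive definite with $\phi_s(e) = 1$. Consequently $1 - \phi_s$ is conditionally negative definite (by the general fact that $1 - f$ is c.n.d.\ whenever $f$ is positive definite with $f(e) = 1$). Division by the positive constant $1-s$ for $s \in [-1,1)$ preserves conditional negative definiteness, and the case $s=1$ follows from the fact that pointwise limits of c.n.d.\ functions are c.n.d. Integration of $\psi_s$ against the finite positive measure $\nu$ then yields that $\psi$ is c.n.d.

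For (1)$\Rightarrow$(2), the key idea is to combine Schoenberg's theorem with Theorem~\ref{thm:posdef}. Since $\psi$ is radial, c.n.d., and vanishes at $e$, Schoenberg gives that $\phi^{(t)} := e^{-t\psi}$ is positive definite, radial, and normalized ($\phi^{(t)}(e) = 1$) for every $t > 0$. Theorem~\ref{thm:posdef} then produces a probability measure $\mu_t$ on $[-1,1]$ with $\phi^{(t)}(x) = \int_{[-1,1]} \phi_s(x)\, d\mu_t(s)$. I would form the positive measure
$$
d\nu_t(s) = \frac{1-s}{t}\, d\mu_t(s)
$$
on $[-1,1]$, so that
$$
\frac{1 - \phi^{(t)}(x)}{t} = \int_{[-1,1]} \psi_s(x)\, d\nu_t(s).
$$
Evaluating at $x$ of word length $1$, where $\phi_s(x) = s$ by the very definition of the parametrization (so $\psi_s(x) \equiv 1$), gives $\nu_t([-1,1]) = (1-e^{-t\psi(x)})/t$, which stays bounded and tends to $\psi(x)$ as $t\to 0^+$. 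By Banach--Alaoglu/Helly selection I can then extract a sequence $t_n \to 0^+$ along which $\nu_{t_n}$ converges weakly-$\ast$ to a finite positive measure $\nu$ on $[-1,1]$. The explicit Chebyshev formula for $\dot\phi_s(n)$ stated in the introduction shows $\phi_s(x)$ is a polynomial in $s$ of degree $|x|$, so $\psi_s(x)$ extends continuously to $s=1$ as a polynomial in $s$ of degree $|x|-1$. Passing to the weak-$\ast$ limit gives the desired identity $\psi(x) = \int \psi_s(x)\, d\nu(s)$.

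Uniqueness follows from the same polynomial observation: as $x$ varies over $\F_r$, the polynomials $s \mapsto \psi_s(x)$ exhaust the space of all polynomials in $s$ (their degrees take each value $0,1,2,\dots$), which is uniformly dense in $C([-1,1])$ by Stone--Weierstrass; two finite measures agreeing on a dense subset of $C([-1,1])$ must coincide. The mass formula $\nu([-1,1]) = \psi(x)$ for $|x|=1$ is immediate from $\psi_s(x) \equiv 1$ in that case. The main obstacle I anticipate is the weak-$\ast$ compactness step in the hard direction: it requires both a uniform bound on $\nu_t([-1,1])$ (supplied by the $|x|=1$ evaluation) and genuine continuity of $s\mapsto\psi_s(x)$ on all of $[-1,1]$ at the potentially awkward point $s=1$. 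Both rest on the polynomial nature of the spherical functions in the parameter $s$, hence ultimately on the explicit formula for $\dot\phi_s(n)$.
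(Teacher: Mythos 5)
Your proposal is correct and follows essentially the same route as the paper: Schoenberg's theorem to get the positive definite family $e^{-t\psi}$, Theorem~\ref{thm:posdef} to obtain probability measures $\mu_t$, the change of density $d\nu_t = \frac{1-s}{t}\,d\mu_t$, a uniform mass bound from evaluation at $|x|=1$, a vague (weak-$\ast$) compactness argument to pass to the limit $t\to 0^+$, and uniqueness via the fact that $s\mapsto\dot\psi_s(n)$ is a polynomial of degree $n-1$. The only detail worth spelling out (which the paper does) is that the identity $\frac{1-e^{-t\psi(x)}}{t}=\int\psi_s(x)\,d\nu_t(s)$ is unaffected by the point $s=1$, since there the integrand on the left vanishes and $\nu_t(\{1\})=0$.
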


Note that \eqref{eq:bochner2} and \eqref{eq:levy2} can be considered as the case $q = r = 1$ of Theorem~\ref{thm:posdef} and Theorem~\ref{thm:negdef}.

In Proposition~\ref{prop:linear-bound-psi} we show that $\psi_s(x) \leq a|x|$ for some constant $a\geq 0$ which is independent of $s$. Thus, as a corollary of Theorem~\ref{thm:negdef} we obtain that every radial, conditionally negative definite map $\psi:\F_r\to\C$ with $\psi(e)=0$ satisfies the linear bound
\begin{align}\label{eq:linear-bound-psi}
\psi(x) \leq c|x|, \qquad x\in\F_r
\end{align}
for some constant $c\geq 0$. We note that $\psi$ is necessarily non-negative. The estimate \eqref{eq:linear-bound-psi} can also be found with a different proof in the unpublished lecture notes \cite[p.~91]{heidelberg} in the case where $r$ is finite.

The paper is organized as follows. In Section~\ref{sec:spherical} we recall some facts about spherical functions on free groups. Section~\ref{sec:posdef} and \ref{sec:negdef} contains the integral representation theorems of positive definite and conditionally negative definite radial functions, respectively. Lastly, in Section~\ref{sec:linearbound} we apply the integral representation from Theorem~\ref{thm:negdef} to deduce the linear bound on the growth of radial, conditionally negative definite functions. The linear bound \eqref{eq:linear-bound-psi} is actually a special case of Theorem 1.7 from \cite{K-semigroups}, but the proof given here of our special case is more illuminating (and much shorter).

\section{Spherical functions}\label{sec:spherical}

\noindent Let $r$ be a natural number with $r \geq 2$, and consider the free group $\F_r$ with $r$ generators. To ease notation in some places we let $q = 2r - 1$ (the same notation is used in \cite{MR1152801}, where it is also better justified). If we identify $\F_r$ with the vertices of its Cayley graph, then each non-trivial element $x\in\F_r$ has $q$ neighbors further from the identity and a single neighbor closer to the identity. The Cayley graph is a homogeneous tree of degree $q+1$, and the length function on $\F_r$ simply returns the distance to the neutral element. A function $\F_r\to\C$ is called \emph{radial}, if its value at $x\in\F_r$ only depends on the word length $|x|$. In other words, a function $\phi:\F_r\to\C$ is a radial, if and only if it has the form $\phi(x) = \dot\phi(|x|)$ for a unique map $\dot\phi:\N_0\to\C$.

For each $n\in\N_0$ we let $E_n = \{ x\in\F_r \mid |x| = n\}$. A simple counting argument (the one given above) shows that $|E_n| = (q+1)q^{n-1}$, when $n\geq 1$. Let $\mu_0 = \delta_e$ be the Dirac function at $e$, and let $\mu_n$ be the function on $\F_r$ with value $1/(q+1)q^{n-1}$ on words of length $n$ and zero otherwise.

Let $\C[\F_r]$ denote the group algebra (with convolution as product), and let $\mc A\subseteq \C[\F_r]$ denote the subspace consisting of (finitely supported) radial functions. Clearly $\mc A$ is the linear span of $(\mu_n)_{n=0}^\infty$. We call $\mc A$ the radial algebra. The following is well-known (see \cite{MR665019}).

\begin{lem}
For $n\geq 1$ we have
\begin{align}\label{eq:recurrence-mu}
\mu_1 * \mu_n = \frac{1}{q+1} \mu_{n-1} + \frac{q}{q+1}\mu_{n+1},
\end{align}
and thus the radial algebra $\mc A$ is the unital, commutative convolution algebra generated $\mu_1$.
\end{lem}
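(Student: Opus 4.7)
The plan is to establish the recurrence \eqref{eq:recurrence-mu} by a direct pointwise computation exploiting the tree structure of the Cayley graph of $\F_r$, and then deduce the algebra statement by a short induction from the recurrence. The geometric input I need is the following: for $x \in \F_r$ with $|x| = k \geq 1$, written in reduced form $x = s_1 s_2 \cdots s_k$ in the symmetric generating set $S$ of size $q + 1$, the element $y^{-1} x$ for $y \in S$ has length $k - 1$ precisely when $y = s_1$, and length $k + 1$ for each of the remaining $q$ generators; and if $|x| = 0$ then $|y^{-1}x| = 1$ for all $q+1$ choices of $y$.

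Applying this to the convolution,
$$
(\mu_1 * \mu_n)(x) \;=\; \sum_{|y|=1} \frac{1}{q+1}\, \mu_n(y^{-1}x) \;=\; \frac{N(x,n)}{(q+1)^2\, q^{n-1}},
$$
where $N(x,n) = \#\{y\in S : |y^{-1}x| = n\}$. By the geometric description, $N(x,n)$ vanishes unless $|x| \in \{n-1, n+1\}$, with values $1$ and $q$ respectively (the degenerate case $|x| = 0$, $n = 1$ gives $N = q+1$ and is covered by the same formulas once one recalls $\mu_0 = \delta_e$). Comparing to $\frac{1}{q+1}\mu_{n-1}(x) + \frac{q}{q+1}\mu_{n+1}(x)$ case by case, using $\mu_m(x) = 1/((q+1)q^{m-1})$ for $|x| = m \geq 1$, yields equality throughout; in particular this shows $\mu_1 * \mu_n \in \mc A$.

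For the algebra statement, rewrite the recurrence as
$$
\mu_{n+1} \;=\; \frac{q+1}{q}\, \mu_1 * \mu_n \;-\; \frac{1}{q}\, \mu_{n-1}, \qquad n \geq 1.
$$
Starting from $\mu_0 = \delta_e$ and $\mu_1$, induction on $n$ shows that each $\mu_n$ lies in the unital subalgebra $\C[\mu_1] \subseteq \C[\F_r]$ generated by $\mu_1$, so $\mc A \subseteq \C[\mu_1]$. The reverse inclusion follows from the analogous induction that each convolution power $\mu_1^{*k}$ lies in $\mc A$: expanding $\mu_1^{*k} = \sum_j c_j \mu_j$, the next power $\mu_1^{*(k+1)} = \sum_j c_j(\mu_1 * \mu_j)$ lies in $\mc A$ by \eqref{eq:recurrence-mu}. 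Hence $\mc A = \C[\mu_1]$ is unital and, being generated by a single element over the unit, automatically commutative. The only mild subtlety in the whole argument is the bookkeeping of the boundary cases $n = 1$ and $|x| = 0$ in the pointwise computation, where $\mu_0 = \delta_e$ must be treated separately from the normalization $\mu_m(x) = 1/((q+1)q^{m-1})$ that is only valid for $m \geq 1$.
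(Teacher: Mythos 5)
The paper itself does not prove this lemma (it is quoted as well known, with a reference to Fig\`a-Talamanca--Picardello), and your direct pointwise computation on the tree, followed by the two-inclusion induction giving $\mc A = \C[\mu_1]$ and commutativity, is exactly the natural argument; the structure of your proof is sound. There is, however, one concrete slip in the middle step: the values you assign to $N(x,n)$ are interchanged. Since $N(x,n)=\#\{y\in S: |y^{-1}x|=n\}$, the case $|x|=n-1\geq 1$ requires lengthening, which happens for the $q$ generators $y\neq s_1$, so $N=q$; the case $|x|=n+1$ requires the cancellation $y=s_1$, so $N=1$. Your sentence ``with values $1$ and $q$ respectively'' states the opposite, contradicting the (correct) geometric fact in your first paragraph, and if taken literally it would produce $\mu_1*\mu_n=\tfrac{1}{q(q+1)}\mu_{n-1}+\tfrac{q^2}{q+1}\mu_{n+1}$, which is not \eqref{eq:recurrence-mu} (and has total mass $\neq 1$). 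With the values restored the case-by-case comparison does close: for $|x|=n-1\geq 1$ both sides equal $\tfrac{q}{(q+1)^2q^{n-1}}$, for $|x|=n+1$ both equal $\tfrac{1}{(q+1)^2q^{n-1}}$, and for $x=e$, $n=1$ both equal $\tfrac{1}{q+1}$, using $\mu_0=\delta_e$. The remainder of your argument, deducing $\mc A=\C[\mu_1]$ by inducting in both directions on the rearranged recurrence $\mu_{n+1}=\tfrac{q+1}{q}\,\mu_1*\mu_n-\tfrac{1}{q}\,\mu_{n-1}$, is correct as written.
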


If we let $P_n$ be the polynomials defined recursively by
\begin{align}\label{eq:recurrence-pol}
\begin{array}{c}
P_0(x) = 1, \quad P_1(x) = x, \\
\\
P_{n+1}(x) = \frac{q+1}{q}\ xP_n(x) - \frac{1}{q}\ P_{n-1}(x),
\end{array}
\end{align}
then it is immediate from the lemma that
\begin{align}\label{eq:mu-pol}
\mu_n = P_n(\mu_1), \qquad n\in\N_0.
\end{align}

Whenever $\phi:\F_r\to\C$ is a function, we let $L\phi(x)$ denote the average value of $\phi$ over the neighbours of $x$. A computation shows that $(\mu_1*\phi)(x)$ is $L\phi(x)$, so the operator $L$ is simply left convolution by $\mu_1$. The operator $L$ is called the \emph{Laplace operator}.

We define spherical functions as the eigenfunctions of the Laplace operator.

\begin{defi}\label{defi:spherical}
With $\F_r$ the free group on $r$ generators ($2\leq r < \infty$) we say that a map $\phi:\F_r\to\C$ is \emph{spherical}, if the following two conditions hold.
\begin{enumerate}[(i)]
	\item $\phi$ is radial with $\phi(e) = 1$,
	\item $L\phi = s\phi$ for some $s\in\C$.
\end{enumerate}
The number $s\in\C$ is called the \emph{eigenvalue} of the spherical function $\phi$.
\end{defi}

If we (as always) let $\dot\phi:\N_0\to\C$ denote the unique map such that $\phi(x) = \dot\phi(|x|)$, then one can rewrite (i) and (ii) as
\begin{align}\label{eq:recurrence-phi}
\begin{array}{c}
\dot\phi(0) = 1, \quad \dot\phi(1) = s, \\
\\
\dot\phi(n+1) = \frac{q+1}{q}\ s\dot\phi(n) - \frac{1}{q}\ \dot\phi(n-1).
\end{array}
\end{align}

It follows that to each $s\in\C$ there is a unique spherical function with eigenvalue $s$, and the eigenvalue is determined by the function's value on words of length one. We denote this function $\phi_s$. Note that in \cite{MR1152801}, \cite{MR665019} and \cite{HSS} our $\phi_s$ is denoted $\varphi_z$, where $z\in\C$ can be any complex number satisfying $s = \frac{q}{q+1}(q^{-z} + q^{z-1})$. From \eqref{eq:recurrence-pol} and \eqref{eq:recurrence-phi} we immediately see that
\begin{align}\label{eq:phi-pol}
\dot\phi_s(n) = P_n(s).
\end{align}
for each $n\in\N_0$ and $s\in\C$.

Recall that the Chebyshev polynomials of first and second kind are defined recursively by
\begin{align*}
T_0(x) &= 1, \quad & U_0(x) &=1, \\
T_1(x) &= x, \quad & U_1(x) &=2x, \\
T_{n+1}(x) &= 2xT_n(x) - T_{n-1}(x), \quad & U_{n+1}(x) &= 2xU_n(x) - U_{n-1}(x).
\end{align*}

Using the recurrence relation \eqref{eq:recurrence-pol} one can easily show that $P_n$ can be expressed using Chebyshev polynomials. Explicitly, the following holds.
\begin{align}\label{eq:pos-spherical}
P_n(x) = \left[ \frac{2}{q+1}\ T_n \left( \frac{q+1}{2\sqrt{q}}\ x \right) + \frac{q-1}{q+1}\ U_n \left( \frac{q+1}{2\sqrt{q}}\ x \right) \right] q^{-n/2}, \qquad x\in\C
\end{align}
for each $n\in\N_0$. 

When $\phi:\F_r\to\C$ and $\psi\in\mc A$ we let
$$
\la \psi,\phi\ra = \sum_{x\in\F_r} \psi(x)\phi(x).
$$
Observe that when $\phi$ is radial, then
$$
\la \mu_n,\phi\ra = \dot\phi(n).
$$
In particular, \eqref{eq:phi-pol} shows that
\begin{align}\label{eq:pol-functional}
\la \mu_n , \phi_s \ra = P_n(s).
\end{align}

We have the following alternative characterization of spherical functions (from \cite{MR665019}).

\begin{lem}[{\cite[Lemma 2]{MR665019}}]\label{lem:sph-multiplicative}
Let $2\leq r < \infty$, and let $\phi:\F_r\to\C$ be a radial map, not identically zero. Then the following are equivalent.
\begin{enumerate}
	\item The map $\phi$ is spherical.
	\item The functional $f: \psi\mapsto \la \psi , \phi \ra$ is multiplicative on the radial algebra $\mc A$.
\end{enumerate}
\end{lem}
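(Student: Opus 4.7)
The plan is to exploit the fact, already noted in \eqref{eq:mu-pol} and the lemma preceding it, that $\mc A$ is the commutative unital algebra $\C[\mu_1]$ generated by the single element $\mu_1$, with $\mu_n = P_n(\mu_1)$. A linear functional on such an algebra is multiplicative if and only if it respects this polynomial identity, so multiplicativity of $f$ becomes an elementary algebraic condition depending only on the values $f(\mu_0)$ and $f(\mu_1)$. The second ingredient is the identity $\la \mu_n,\phi\ra = \dot\phi(n)$ for radial $\phi$, which converts the multiplicativity condition into the spherical recurrence \eqref{eq:recurrence-phi}.

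For the direction (1)$\Rightarrow$(2), suppose $\phi$ is spherical with eigenvalue $s$. Then by \eqref{eq:phi-pol} one has $f(\mu_n) = \la \mu_n,\phi\ra = \dot\phi(n) = P_n(s) = P_n(f(\mu_1))$. Using \eqref{eq:mu-pol} and linearity, for any $a = P(\mu_1), b = Q(\mu_1) \in \mc A$ we obtain $f(a*b) = f((PQ)(\mu_1)) = (PQ)(f(\mu_1)) = P(f(\mu_1))\, Q(f(\mu_1)) = f(a) f(b)$, which is multiplicativity.

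For (2)$\Rightarrow$(1), assume $f$ is multiplicative. Since $\phi \not\equiv 0$ and $\phi$ is radial, $\dot\phi(n) = f(\mu_n)$ is nonzero for some $n$, so $f$ is a nonzero algebra homomorphism. The identity $\mu_0 * \mu_n = \mu_n$ together with $f(\mu_n) \neq 0$ forces $f(\mu_0) = 1$, hence $\dot\phi(0) = \phi(e) = 1$, which is property (i) of Definition~\ref{defi:spherical}. Setting $s = \dot\phi(1) = f(\mu_1)$, the identity $\mu_n = P_n(\mu_1)$ combined with multiplicativity yields $\dot\phi(n) = f(\mu_n) = P_n(f(\mu_1)) = P_n(s)$. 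Comparing with the recurrence \eqref{eq:recurrence-pol} for $P_n$ shows that $\dot\phi$ satisfies \eqref{eq:recurrence-phi}, so $\phi = \phi_s$ is spherical with eigenvalue $s$.

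The proof is essentially algebraic and the only minor subtlety is deducing $f(\mu_0)=1$ from the \emph{a priori} weaker hypothesis that $\phi$ is not identically zero (rather than $\phi(e)\neq 0$), but this is handled as above by exploiting that $\mu_0$ is the unit of $\mc A$ and that $f$ cannot be the zero functional.
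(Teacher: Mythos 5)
Your proof is correct and follows essentially the same route as the paper: both directions rest on the identities $\mu_n = P_n(\mu_1)$ and $\la \mu_n,\phi\ra = \dot\phi(n)$, with $\dot\phi(0)=1$ extracted from the unit $\mu_0$ exactly as in the paper. The only cosmetic difference is that in (2)$\Rightarrow$(1) you pass through $\dot\phi(n) = P_n(s)$ and the recurrence \eqref{eq:recurrence-pol}, whereas the paper applies multiplicativity directly to $\mu_1 * \mu_n$ via \eqref{eq:recurrence-mu}; the content is the same.
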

\begin{proof}
Suppose first $\phi$ is spherical with eigenvalue $s$. Then from \eqref{eq:mu-pol} and \eqref{eq:pol-functional} we see that
$$
\la P_n(\mu_1), \phi_s\ra = P_n(s),
$$
and since $\{P_n\}_{n=0}^\infty$ spans the set of all polynomials, we get
$$
\la P(\mu_1), \phi_s\ra = P(s), \qquad\text{for every } P\in\C[x].
$$
Since $\mu_1$ generates $\mc A$, this shows that $f$ is multiplicative on $\mc A$.

Suppose conversely that $f$ is multiplicative on $\mc A$. We will show that $\phi$ satisfies \eqref{eq:recurrence-phi} for some $s\in\C$. Since $\mu_0$ is the unit of $\mc A$, we have
$$
\la \mu_n,\phi\ra = \la \mu_0,\phi\ra\la\mu_n,\phi\ra,
$$
and beacuse $\phi$ is not identically zero, we must have $\dot\phi(0) = \la \mu_0,\phi\ra = 1$.

Let $s = \dot\phi(1) = \la \mu_1 , \phi\ra$. We have
$$
\la \mu_1 * \mu_n ,\phi \ra = \la\mu_1,\phi\ra\la\mu_n,\phi\ra = s\dot\phi(n).
$$
Also by \eqref{eq:recurrence-mu} we have
$$
\la \mu_1 * \mu_n ,\phi \ra = \frac{1}{q+1} \la \mu_{n-1},\phi\ra + \frac{q}{q+1} \la\mu_{n+1},\phi\ra = \frac{1}{q+1} \dot\phi(n-1) + \frac{q}{q+1} \dot\phi(n+1).
$$
This proves \eqref{eq:recurrence-phi} and shows that $\phi$ is spherical.
\end{proof}

%In \cite{MR665019} it is shown that $\phi_s$ is bounded, if and only if $s$ lies in the ellipse
%$$
%\{ s\in \C \mid \mr{Re}(s)^2 + \left(\frac{q+1}{q-1}\right)^2 \mr{Im}(s)^2 \leq 1\}.
%$$

To define the spherical functions on $\F_\infty$, we first define the Laplace operator $L$ on \emph{radial} functions on $\F_\infty$. When $\phi$ is a radial function on $\F_\infty$ we let $L\phi$ denote the function on $\F_\infty$ given by
$$
L\phi(x) = \dot\phi(|x| + 1), \qquad x\in\F_\infty.
$$
\begin{defi}\label{defi:spherical-inf}
As before, we say that a radial function $\phi:\F_\infty\to\C$ with $\phi(e) = 1$ is \emph{spherical}, if $L\phi = s\phi$ for some $s\in\C$ called the \emph{eigenvalue}.
\end{defi}
We see that a radial function $\phi$ is spherical with eigenvalue $s$, if and only if it satisfies the following recurrence relation.
\begin{align}\label{eq:recurrence-phi-infty}
\begin{array}{c}
\dot\phi(0) = 1, \quad \dot\phi(1) = s, \\
\\
\dot\phi(n+1) = s\dot\phi(n).
\end{array}
\end{align}
Obviously, the spherical function with eigenvalue $s\in\C$ is given by
$$
\dot\phi(n) = s^n, \qquad n\in\N_0.
$$

We will need to work with spherical functions on several free groups at once, and when this is the case we denote the spherical function on $\F_r$ with eigenvalue $s$ by $\phi_{s,r}$. Comparing \eqref{eq:recurrence-phi} with \eqref{eq:recurrence-phi-infty} we see that for each $s\in\C$ and $n\in\N_0$
\begin{align}\label{eq:ptwise-limit}
\dot\phi_{s,r}(n) \to \dot\phi_{s,\infty}(n) \qquad\text{as } r\to\infty.
\end{align}

\section{Positive definite maps}\label{sec:posdef}

\noindent In this section we prove Theorem~\ref{thm:posdef} first in the case where $2\leq r < \infty$, and then from this we deduce the case $r=\infty$ by a limiting argument. Recall that a map $\phi:G\to\C$ defined on a group $G$ is called positive definite, if
$$
\sum_{j,k=1}^n c_j\bar{c_k} \phi(x_k^{-1}x_j) \geq 0
$$
for every $n\in\N$, $\{x_1,\ldots,x_n\}\subseteq G$ and $\{c_1,\ldots,c_n\}\subseteq \C$.

In \cite[p. 53]{MR1152801} it is shown that the spherical function $\phi_s$ is positive definite, if and only if $s\in[-1,1]$. This can also be seen in a different way using the following fact, which is a consequence of The Haagerup-Paulsen-Wittstock Factorization Theorem for completely bounded maps \cite[Theorem B.7]{BO}: For a map $\phi:\F_r\to\C$ with $\phi(e) = 1$, it holds that $\phi$ is positive definite, if and only if the Herz-Schur norm of $\phi$ is 1. In \cite[Theorem 3.3]{HSS} the authors calculate the Herz-Schur norm of spherical functions, and it follows from their theorem that $\phi_s$ is positive definite, if and only if $s\in[-1,1]$.

%The following theorem shows that every radial, positive definite map has an integral representation using spherical functions.

\begin{thm}\label{thm:integral-posdef}
Let $r\geq 2$ be a natural number, and let $\phi:\F_r\to\C$ be radial with $\phi(e) = 1$. The following are equivalent.
\begin{enumerate}
	\item The map $\phi$ is positive definite.
	\item There is a probability measure $\mu$ on $[-1,1]$ such that
	\begin{align}\label{eq:int1}
	\phi(x) = \int_{-1}^1 \phi_s(x)\ d\mu(s), \qquad x\in\F_r,
	\end{align}
	where $\phi_s$ is the spherical function given by \eqref{eq:pos-spherical}.
\end{enumerate}
Moreover, if {\us(2)} holds, then $\mu$ is uniquely determined by $\phi$.
\end{thm}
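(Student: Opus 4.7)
The implication $(2)\Rightarrow(1)$ I would handle directly: each $\phi_s$ with $s\in[-1,1]$ is positive definite (by the fact from \cite{MR1152801} recalled just before the statement), so applying the positive-definiteness inequality to a finite collection and integrating against $\mu$ yields positive definiteness of $\phi$; moreover $\phi(e)=\int_{-1}^{1}1\,d\mu(s)=\mu([-1,1])$, so the normalization $\phi(e)=1$ matches $\mu$ being a probability measure.

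For the main direction $(1)\Rightarrow(2)$ my plan is to combine the GNS construction with the spectral theorem applied to the image of $\mu_1$. First I would extend $\phi$ by linearity to a functional $\tilde\phi$ on the group $*$-algebra $\C[\F_r]$. Positive definiteness of $\phi$ is exactly the statement $\tilde\phi(\psi^* * \psi)\geq 0$ for every $\psi\in\C[\F_r]$, and $\tilde\phi(\delta_e)=1$, so $\tilde\phi$ is a state. The GNS construction then yields a $*$-representation $\pi:\C[\F_r]\to B(H)$ with unit cyclic vector $\xi$ satisfying $\phi(x)=\la \pi(\delta_x)\xi,\xi\ra$ for $x\in\F_r$; since each $\pi(\delta_g)$ is unitary, $\pi$ is automatically bounded.

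Next I set $T=\pi(\mu_1)$. Writing $\mu_1=\frac{1}{q+1}\sum_{i=1}^r(\delta_{g_i}+\delta_{g_i^{-1}})$ gives $T=T^*$ with $\|T\|\leq 1$, so $\sigma(T)\subseteq[-1,1]$. The spectral theorem supplies a probability measure $\mu$ on $[-1,1]$ — the spectral measure of $T$ at $\xi$ — with $\la P(T)\xi,\xi\ra=\int_{-1}^{1}P(s)\,d\mu(s)$ for every polynomial $P$. Combining $\mu_n=P_n(\mu_1)$ from \eqref{eq:mu-pol}, the identity $\la\mu_n,\phi\ra=\dot\phi(n)$ (which follows from radiality and $|E_n|=(q+1)q^{n-1}$), and $\dot\phi_s(n)=P_n(s)$ from \eqref{eq:phi-pol}, I obtain
$$
\dot\phi(n)\;=\;\la P_n(T)\xi,\xi\ra\;=\;\int_{-1}^{1}P_n(s)\,d\mu(s)\;=\;\int_{-1}^{1}\dot\phi_s(n)\,d\mu(s)
$$
for every $n\in\N_0$, which is the claimed integral representation. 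Uniqueness follows from the fact that $\deg P_n=n$, so $\{P_n\}_{n=0}^\infty$ spans $\C[x]$; two probability measures on $[-1,1]$ with matching integrals against all $\phi_s$ therefore agree on all polynomials, hence on $C([-1,1])$ by Stone-Weierstrass, and so coincide.

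I do not anticipate a serious obstacle. The only point requiring any real input from the geometry of $\F_r$ is the bound $\|\mu_1\|\leq 1$ in the GNS representation, which localizes the spectral measure on $[-1,1]$; this matches exactly the range of $s$ for which $\phi_s$ is positive definite, so the two directions of the theorem dovetail. Note that it is essential to work with the full $C^*$-norm rather than the reduced one (whose spectrum would only be $[-2\sqrt q/(q+1),2\sqrt q/(q+1)]$), but since we only build $\pi$ from the given state this is automatic in the argument above.
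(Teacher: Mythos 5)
Your proof is correct and is essentially the paper's argument in a slightly different packaging: the paper extends $\phi$ to a state on the universal $C^*$-algebra $C^*(\F_r)$, restricts to $C^*(\mu_1)\simeq C([-1,1])$ and invokes the Riesz representation theorem, whereas you perform the GNS construction on $\C[\F_r]$ and take the spectral measure of $T=\pi(\mu_1)$ at the cyclic vector, which yields the same measure. All the essential ingredients coincide: $\|\mu_1\|\leq 1$ with $\mu_1=\mu_1^*$ to localize the spectrum in $[-1,1]$, the identities $\mu_n=P_n(\mu_1)$ and $\dot\phi_s(n)=P_n(s)$ to read off the integral representation, and uniqueness from the fact that the $P_n$ span all polynomials.
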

\begin{proof}
\mbox{}

(2)$\implies$(1). This is trivial, since each $\phi_s$ is positive definite.

(1)$\implies$(2). Consider the universal \Cs-algebra $C^*(\F_r)$. Every state on $C^*(\F_r)$ restricts to a positive definite map $\F_r$, and by universality every positive definite map $\rho$ on $\F_r$ with $\rho(e) = 1$ extends by linearity and continuity to a state on $C^*(\F_r)$. Let $\Phi$ be the extension of $\phi$ to $C^*(\F_r)$ so that $\Phi(x) = \phi(x)$ for every $x\in\F_r \subseteq C^*(\F_r)$. We also let $\Phi_s$ denote the extension of $\phi_s$ to a state on $C^*(\F_r)$, when $s\in[-1,1]$.

Let $(\mu_n)_{n=0}^\infty$ be as above, and consider them as elements of $C^*(\F_r)$ in the usual way. The element $\mu_1$ generates the \emph{radial} \Cs-subalgebra $C^*(\mu_1)$ of $C^*(\F_r)$. By Lemma~\ref{lem:sph-multiplicative} each $\Phi_s$ is multiplicative on $C^*(\mu_1)$. Since $\mu_1 = \mu_1^*$ and $\|\mu_1\| \leq 1$, the spectrum $\sigma(\mu_1)$ of $\mu_1$ is a subset of $[-1,1]$. Conversely, by \eqref{eq:pol-functional} we see that $\Phi_s(\mu_1) = s$, so $[-1,1]\subseteq \sigma(\mu_1)$.

By spectral theory $C^*(\mu_1) \simeq C([-1,1])$. Restricting $\Phi$ to $C^*(\mu_1)$ gives a state which by the Riesz Representation Theorem has the form
\begin{align}\label{eq:riesz}
\Phi(f(\mu_1)) = \int_{-1}^1 f\ d\mu, \qquad f\in C([-1,1])
\end{align}
for a unique probability measure $\mu$ on $[-1,1]$. Since $\phi$ is radial,
$$
\dot\phi(n) = \Phi(\mu_n) = \Phi(P_n(\mu_1))
$$
and we know from \eqref{eq:riesz} and \eqref{eq:phi-pol} that
$$
\Phi(P_n(\mu_1)) = \int_{-1}^1 P_n(s)\ d\mu(s) = \int_{-1}^1 \dot\phi_s(n)\ d\mu(s).
$$
This shows
\begin{align}
\dot\phi(n) = \int_{-1}^1 \dot\phi_s(n)\ d\mu(s), \qquad n=0,1,2,\ldots
\end{align}

We now turn to prove uniqueness. Note that if $\mu$ is a probability measure satisfying \eqref{eq:int1}
then we must have
$$
\Phi(P_n(\mu_1)) = \Phi(\mu_n) = \dot\phi(n) = \int_{-1}^1 \dot\phi_s(n)\ d\mu(s) = \int_{-1}^1 P_n(s)\ d\mu(s),
$$
so it must be the unique measure guaranteed by the Riesz Representation Theorem, since the polynomials $\{P_n\}_{n=0}^\infty$ span the set of all polynomials.
\end{proof}

We wish to prove an analogue of the previous theorem for positive definite, radial maps on $\F_\infty$. Since we no longer have the aid of the radial algebra, the idea is instead to use the previous theorem together with a limit argument. But first we identify those spherical functions on $\F_\infty$ which are positive definite.

Since $\phi_{s,r}$ is positive definite, when $s\in[-1,1]$, we get by \eqref{eq:ptwise-limit} that $\phi_{s,\infty}$ is positive definite, when $s\in[-1,1]$. This may also be seen directly as follows.

When $s\in\{0,1\}$, this is easy. When $0 < s < 1$, it is a well-known result from \cite{haagerup-example} that $s\mapsto s^{|x|}$ is positive definite. When $-1 \leq s< 0$ we write $s^{|x|} = (-1)^{|x|}(-s)^{|x|}$, and since a product of positive definite maps is again positive definite, it now suffices to show that $x\mapsto (-1)^{|x|}$ is positive definite on $\F_\infty$.

Notice that the parity of $|xy|$ and $|x| + |y|$ is the same, since the reduced form of $xy$ is obtained by cancelling the same number of letters from $x$ and $y$. Hence $(-1)^{|xy|} = (-1)^{|x|+|y|}$, so $x\mapsto (-1)^{|x|}$ is a homomorphism of $\F_\infty$ into the unit circle in $\C$. It is easily seen that such homomorphisms are always positive definite.

Positive definite, radial functions are necessarily real and bounded, so when $s\notin [-1,1]$, the map $\phi_{s,\infty}$ cannot be positive definite. This shows that the positive definite spherical functions on $\F_\infty$ are precisely those with eigenvalue $s\in[-1,1]$.

\begin{lem}
For each $x\in\F_r$ the functions $s\mapsto \phi_{s,r}(x)$ converge uniformly to $s\mapsto \phi_{s,\infty}(x)$ as $r\to\infty$.
\end{lem}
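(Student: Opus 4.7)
The plan is to observe that both $\phi_{s,r}(x)$ and $\phi_{s,\infty}(x)$ depend on $x$ only through the word length $n = |x|$, so by fixing such an $n$ the claim reduces to showing that the polynomials $s \mapsto \dot\phi_{s,r}(n) = P_n(s)$, determined by the recursion \eqref{eq:recurrence-phi} with parameter $q = 2r-1$, converge uniformly on $[-1,1]$ to $s^n = \dot\phi_{s,\infty}(n)$ as $r \to \infty$ (equivalently as $q \to \infty$), for each fixed $n \in \N_0$.

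I would prove this by induction on $n$. The cases $n = 0$ and $n = 1$ are immediate, since $\dot\phi_{s,r}(0) = 1$ and $\dot\phi_{s,r}(1) = s$ agree with $\dot\phi_{s,\infty}$ for every $r$. For the inductive step, rewrite \eqref{eq:recurrence-phi} as
$$
\dot\phi_{s,r}(n+1) \;=\; s\,\dot\phi_{s,r}(n) \;+\; \frac{1}{q}\bigl(s\,\dot\phi_{s,r}(n) - \dot\phi_{s,r}(n-1)\bigr),
$$
so that the right-hand side differs from $s \cdot \dot\phi_{s,r}(n)$ by an error term of order $1/q$. Using that $\phi_{s,r}$ is positive definite for every $s \in [-1,1]$ (as recalled just before the lemma), we have the a priori bound $|\dot\phi_{s,r}(k)| \leq \dot\phi_{s,r}(0) = 1$ for all $k$, uniformly in $s \in [-1,1]$. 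Hence the error term is bounded in modulus by $2/q$, which tends to zero uniformly in $s$. Combined with the inductive hypothesis $\dot\phi_{s,r}(n) \to s^n$ uniformly on $[-1,1]$ and the fact that $|s| \leq 1$, we conclude $\dot\phi_{s,r}(n+1) \to s \cdot s^n = s^{n+1}$ uniformly on $[-1,1]$.

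I do not foresee a genuine obstacle: the only point that must not be skipped is the uniform boundedness $|\dot\phi_{s,r}(k)| \leq 1$ on $[-1,1]$, which is needed to control the $O(1/q)$ error in the recursion and which one gets for free from positive definiteness. Absent this observation, one would have to fall back on the explicit Chebyshev formula \eqref{eq:pos-spherical} together with the asymptotics of $T_n$ and $U_n$ at infinity to extract the limit $s^n$, which is a heavier and less transparent route.
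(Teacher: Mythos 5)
Your proof is correct and follows essentially the same route as the paper: both argue by induction on the word length $n$, using the recurrence relations \eqref{eq:recurrence-phi} and \eqref{eq:recurrence-phi-infty} to show that the error increases by at most $2/q = 2/(2r-1)$ at each step, with the uniform bound $|\dot\phi_{s,r}(k)| \leq 1$ on $[-1,1]$ controlling the correction term. Your explicit mention of where positive definiteness enters is a nice touch, but the argument is the paper's argument.
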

\begin{proof}
We will estimate the value
$$
\delta(r,n) = \sup_{s\in[-1,1]} | \dot\phi_{s,r}(n) - \dot\phi_{s,\infty}(n) |.
$$
When $n\in\{0,1\}$, we obviously have $\delta(r,n) = 0$. When $n\geq 1$ we find using the recurrence relations \eqref{eq:recurrence-phi} and \eqref{eq:recurrence-phi-infty}
$$
| \dot\phi_{s,r}(n+1) - \dot\phi_{s,\infty}(n+1) | \leq | \dot\phi_{s,r}(n) - \dot\phi_{s,\infty}(n) | + \frac{2}{2r-1},
$$
so
$$
\delta(r,n+1) \leq
\delta(r,n) + \frac{2}{2r-1},
$$
By induction over $n$ it follows that $\delta(r,n) \to 0$ as $r\to\infty$ for each $n\in\N_0$.
\end{proof}

We are now ready to prove a version of Theorem~\ref{thm:integral-posdef}, when $r$ is infinite.
\begin{thm}\label{thm:integral-posdef-infinite}
Let $\phi:\F_\infty\to\C$ be radial with $\phi(e) = 1$. The following are equivalent.
\begin{enumerate}
	\item The map $\phi$ is positive definite.
	\item There is a probability measure $\mu$ on $[-1,1]$ such that
	\begin{align}\label{eq:int2}
	\dot\phi(n) = \int_{-1}^1 s^n\ d\mu(s), \qquad n\in\N.
	\end{align}
\end{enumerate}
Moreover, if {\us(2)} holds, then $\mu$ is uniquely determined by $\phi$.
\end{thm}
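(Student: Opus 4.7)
The plan is as follows. The implication (2) $\Rightarrow$ (1) is immediate because, as noted in the paragraph preceding the lemma, each function $x\mapsto s^{|x|} = \phi_{s,\infty}(x)$ is positive definite on $\F_\infty$ for $s\in[-1,1]$, and pointwise integrals of positive definite functions against a positive measure remain positive definite. For the substantial direction (1) $\Rightarrow$ (2), the idea is to reduce to the finite-rank case by restriction and then use a weak* compactness argument together with the uniform convergence lemma just proved. Uniqueness will follow from the Weierstrass approximation theorem.

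For each integer $r\geq 2$, identify $\F_r$ as the subgroup of $\F_\infty$ generated by the first $r$ generators. Since the word length of an element of $\F_r$ is the same whether computed in $\F_r$ or in $\F_\infty$, the restriction $\phi|_{\F_r}$ is again radial, positive definite, and normalized at the identity. Theorem~\ref{thm:integral-posdef} then yields a probability measure $\mu_r$ on $[-1,1]$ satisfying
$$
\dot\phi(n) = \int_{-1}^1 \dot\phi_{s,r}(n)\ d\mu_r(s), \qquad n\in\N_0.
$$

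The space of probability measures on $[-1,1]$ is weak*-compact and metrizable (since $C([-1,1])$ is separable), so some subsequence $(\mu_{r_k})$ converges weak* to a probability measure $\mu$ on $[-1,1]$. For fixed $n\in\N_0$, I would estimate
$$
\left| \int_{-1}^1 \dot\phi_{s,r_k}(n)\ d\mu_{r_k}(s) - \int_{-1}^1 s^n\ d\mu(s) \right|
\leq \sup_{s\in[-1,1]}|\dot\phi_{s,r_k}(n) - s^n| + \left|\int_{-1}^1 s^n\ d\mu_{r_k}(s) - \int_{-1}^1 s^n\ d\mu(s)\right|.
$$
The first term tends to $0$ by the preceding lemma, and the second by weak* convergence, since $s\mapsto s^n$ is continuous on $[-1,1]$. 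Since the left-hand side of the finite-$r$ representation is $\dot\phi(n)$ independently of $r$, we conclude $\dot\phi(n) = \int_{-1}^1 s^n\ d\mu(s)$ for every $n\in\N_0$, which is (2).

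For uniqueness, if two probability measures $\mu_1,\mu_2$ on $[-1,1]$ both satisfy \eqref{eq:int2}, then $\int_{-1}^1 s^n\ d\mu_1(s) = \int_{-1}^1 s^n\ d\mu_2(s)$ for every $n\in\N_0$ (the case $n=0$ being automatic as both measures are probability measures). By the Weierstrass approximation theorem, polynomials are dense in $C([-1,1])$, so $\mu_1$ and $\mu_2$ agree on all of $C([-1,1])$ and must therefore coincide. The principal technical point in the whole argument is the interchange of limits in the estimate above, which is why the uniform convergence lemma is decisive: without it, weak* convergence of $\mu_{r_k}$ alone would not allow us to let the integrands depend on $r_k$.
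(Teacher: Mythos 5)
Your proof is correct and follows essentially the same route as the paper: restrict to the subgroups $\F_r$, apply Theorem~\ref{thm:integral-posdef} to get measures $\mu_r$, pass to a weak* limit point using compactness of the probability measures on $[-1,1]$, and control the integrands via the uniform convergence lemma. The only differences are cosmetic — you extract a convergent subsequence via metrizability where the paper works with a vague cluster point, and you phrase uniqueness via Weierstrass approximation rather than via the span of the polynomials $P_n$, which amounts to the same moment argument.
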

\begin{proof}
\mbox{}

(2)$\implies$(1). We have seen that the map $x\mapsto s^{|x|}$ is positive definite on $\F_\infty$ for each $s\in[-1,1]$. Hence $\phi$ is positive definite.

(1)$\implies$(2).
We consider the chain of subgroups $\F_2 \subseteq \F_3 \subseteq \ldots \subseteq \F_\infty$. Let $\phi_{s,r}$ denote the spherical function on $\F_r$ with parameter $s\in\C$. 

Suppose $\phi:\F_\infty\to\C$ is positive definite, radial and satisfies $\phi(e) = 1$. The restriction of $\phi$ to the subgroup $\F_r$ is still radial and positive definite. Using Theorem~\ref{thm:integral-posdef}, when $r$ is finite, we find a probability measure $\mu_r$ on $[-1,1]$ such that
\begin{align}\label{eq:int3}
\dot\phi(n) = \int_{-1}^1 \dot\phi_{s,r}(n)\ d\mu_r(s)
\end{align}
for all $n\in\N_0$. Let $\mu$ be any cluster point (in the vague topology) of the sequence $(\mu_r)_{r=2}^\infty$. We claim that
\begin{align}\label{eq:int4}
\dot\phi(n) = \int_{-1}^1 \dot\phi_{s,\infty}(n)\ d\mu(s), \qquad n\in\N_0.
\end{align}
Let $n\in\N_0$ and $\epsilon > 0$ be given. Since $\mu$ is a cluster point of $(\mu_r)_{r=2}^\infty$, there is some $r_0$ such that
\begin{align}\label{eq:int5}
\left| \int_{-1}^1 \dot\phi_{s,\infty}(n) \ d\mu(s) - \int_{-1}^1 \dot\phi_{s,\infty}(n) \ d\mu_r(s)  \right| < \epsilon
\end{align}
for infinitely many $r\geq r_0$. Further, since $\dot\phi_{s,r}(n)\to\dot\phi_{s,\infty}(n)$ uniformly as $r\to\infty$, there is some $r\geq r_0$ such that \eqref{eq:int5} holds and
$$
\left| \int_{-1}^1 \dot\phi_{s,\infty}(n) \ d\mu_r(s) - \int_{-1}^1 \dot\phi_{s,r}(n) \ d\mu_r(s)  \right| < \epsilon.
$$
Combining this with \eqref{eq:int3}, we obtain
$$
\left| \dot\phi(n) - \int_{-1}^1 \dot\phi_{s,\infty}(n)\ d\mu(s) \right| < 2\epsilon,
$$
which proves \eqref{eq:int4}.

Uniqueness follows in the same way as in the proof of Theorem~\ref{thm:integral-posdef}.

\end{proof}

Theorem~\ref{thm:posdef} is the combination of Theorem~\ref{thm:integral-posdef} and Theorem~\ref{thm:integral-posdef-infinite} modulo scaling of the value $\phi(e)$.

\begin{rem}
Note that (2) in Theorem~\ref{thm:integral-posdef-infinite} coincides with the characterization of positive definite functions on the semigroup $\N_0$ given in \cite[Proposition 4.9]{BCR}.
\end{rem}

\section{Conditionally negative definite maps}\label{sec:negdef}

\noindent In this section we introduce a family of conditionally negative definite maps on $\F_r$ parametrized by the interval $[-1,1]$ and prove that every radial conditionally negative definite map has an integral representation using these.

Recall that a map $\psi:G\to\C$ defined on a group $G$ is called conditionally negative definite, if $\psi$ is hermitian, i.e. $\psi(x^{-1}) = \bar{\psi(x)}$ when $x\in G$, and
$$
\sum_{j,k=1}^n c_j\bar{c_k} \psi(x_k^{-1}x_j) \leq 0
$$
for every $n\in\N$, $\{x_1,\ldots,x_n\}\subseteq G$ and $\{c_1,\ldots,c_n\}\subseteq \C$ such that $\sum_j c_j = 0$. Note that a radial, conditionally negative definite function is real-valued.

When $x\in\F_r$ we define
\begin{align}\label{eq:neg}
\begin{array}{rl}
\psi_s(x) &= \dfrac{1 - \phi_s(x)}{1-s}, \qquad -1\leq s < 1 \\
\\
\psi_1(x) &= {\displaystyle \lim_{s\to 1} \psi_s(x). }
\end{array}
\end{align}
Clearly each map $\psi_s$ is radial. Since the map $s\mapsto\dot\phi_s(n)$ is a polynomial in $s$, it is differentiable at $s = 1$, so the definition of $\psi_1$ makes sense, and $\dot\psi_1(n) = \dot\phi'_1(n)$. Note that $s\mapsto\dot\psi_s(n)$ is a polynomial in $s$ of degree $n-1$.

Each $\phi_s$ is positive definite when $s\in[-1,1]$, so $\psi_s$ is conditionally negative definite for each $s \in [-1,1[$, and since $\psi_1$ is the poinwise limit of conditionally negative maps, it is itself conditionally negative definite.

Let $s\in[-1,1]$. Since $\phi_s$ is positive definite and radial, it is real. And also $|\phi_s(x)| \leq \phi_s(e) = 1$ for each $x\in\F_r$. In particular, each $\psi_s$ is non-negative with $\psi_s(e) = 0$.

\begin{thm}\label{thm:integral-rep-psi}
Let $2 \leq r \leq \infty$, and let $\psi:\F_r\to\C$ be radial with $\psi(e) = 0$. The following are equivalent.
\begin{enumerate}
	\item The map $\psi$ is conditionally negative definite.
	\item There is a finite positive Borel measure $\nu$ on $[-1,1]$ such that
	$$
	\psi(x) = \int_{-1}^1 \psi_s(x) \ d\nu(s), \qquad x\in\F_r,
	$$
	where $\psi_s$ is the function given by \eqref{eq:neg}.
\end{enumerate}
Moreover, if {\us(2)} holds, then $\nu$ is uniquely determined by $\psi$, and $\nu([-1,1]) = \psi(x)$, when $|x| = 1$.
\end{thm}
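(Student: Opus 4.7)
The plan is to dispose of $(2) \Rightarrow (1)$ first. Given the integral representation, swapping the defining sum and the integral yields
$$
\sum_{j,k} c_j \bar c_k \psi(x_k^{-1} x_j) = \int_{-1}^{1} \Bigl( \sum_{j,k} c_j \bar c_k \psi_s(x_k^{-1} x_j) \Bigr) \, d\nu(s) \le 0
$$
whenever $\sum_j c_j = 0$, because each $\psi_s$ with $s \in [-1,1]$ is conditionally negative definite, as recorded just after \eqref{eq:neg}. Hermiticity of $\psi$ is automatic since $\psi$ is radial and real-valued, being a positive integral of the real-valued $\psi_s$.

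For $(1) \Rightarrow (2)$ the key tool is Schoenberg's theorem. Since $\psi$ is radial and conditionally negative definite, it is real, so Schoenberg says that $\phi_t := e^{-t\psi}$ is a positive definite, radial function with $\phi_t(e) = 1$ for every $t > 0$. Applying Theorem~\ref{thm:posdef} gives a probability measure $\mu_t$ on $[-1,1]$ with $e^{-t\psi(x)} = \int_{-1}^{1} \phi_s(x)\, d\mu_t(s)$. The identity $1 - \phi_s(x) = (1-s)\psi_s(x)$ holds on all of $[-1,1]$ — by the definition of $\psi_s$ for $s < 1$, and by continuity at $s = 1$ since $\phi_1 \equiv 1$ and $s \mapsto \dot\phi_s(n)$ is polynomial — so introducing the positive measure $d\nu_t(s) = t^{-1}(1-s)\, d\mu_t(s)$ on $[-1,1]$ gives
$$
\frac{1 - e^{-t\psi(x)}}{t} = \int_{-1}^{1} \psi_s(x) \, d\nu_t(s), \qquad x \in \F_r.
$$
At any $x_0$ with $|x_0| = 1$ one has $\dot\psi_s(1) = 1$ for all $s$ (direct from \eqref{eq:neg} and $\dot\phi_s(1) = s$), so evaluating at $x_0$ reads $\nu_t([-1,1]) = (1 - e^{-t\psi(x_0)})/t$, which tends to $\psi(x_0)$ as $t \to 0^+$. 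Thus the family $(\nu_t)_{t > 0}$ has uniformly bounded total mass, and by weak-$*$ compactness of finite positive Borel measures on the compact space $[-1,1]$, some subsequence $\nu_{t_n}$ with $t_n \to 0^+$ converges weakly to a finite positive Borel measure $\nu$. Each $s \mapsto \psi_s(x)$ is polynomial and hence continuous on $[-1,1]$, so passing to the limit yields $\psi(x) = \int_{-1}^{1} \psi_s(x) \, d\nu(s)$.

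For uniqueness and the mass formula, $\dot\psi_s(n)$ is a polynomial in $s$ of degree exactly $n - 1$ (since $P_n$ has degree $n$ with $P_n(1) = 1$, so $1 - P_n(s)$ is divisible by $1 - s$). Hence $\{\dot\psi_s(n)\}_{n \ge 1}$ spans $\C[s]$, which is uniformly dense in $C([-1,1])$ by Weierstrass, so $\nu$ is determined by the numbers $\int_{-1}^{1} \dot\psi_s(n) \, d\nu(s) = \dot\psi(n)$. The identity $\nu([-1,1]) = \psi(x)$ for $|x| = 1$ is immediate from $\dot\psi_s(1) = 1$. The main subtlety of the argument is the compactness step: what makes it work is precisely the vanishing of $1 - \phi_s$ at $s = 1$, which keeps the measures $\nu_t$ uniformly bounded as $t \to 0^+$ rather than concentrating mass at the endpoint.
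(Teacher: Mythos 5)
Your proof is correct and follows essentially the same route as the paper: Schoenberg plus the positive definite representation theorem applied to $e^{-t\psi}$, the rescaled measures $d\nu_t(s)=t^{-1}(1-s)\,d\mu_t(s)$, the mass bound from evaluating at a word of length one, weak-$*$ compactness as $t\to 0^+$, and uniqueness from $\dot\psi_s(n)$ being a polynomial of degree $n-1$. The only cosmetic difference is that you justify the key identity at $s=1$ by noting both sides vanish there, whereas the paper discards the point $s=1$ since $\nu_t(\{1\})=0$; both are fine.
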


\begin{proof}
\mbox{}

(2)$\implies$(1). This is trivial, since each $\psi_s$ is conditionally negative definite.

(1)$\implies$(2).
For $t> 0$ the map $e^{-t\psi}$ is radial, positive definite with $e^{-t\psi(e)} = 1$, so there is a probability measure $\mu_t$ on $[-1,1]$ such that
$$
e^{-t\psi(x)} = \int_{-1}^1 \phi_s(x)\ d\mu_t(s), \qquad x\in\F_r.
$$
Let $\nu_t$ be the positive measure with density $s\mapsto(1-s)/t$ with respect to $\mu_t$. Then
$$
\frac{1 - e^{-t\psi(x)}}{t} =
\int_{-1}^1 \frac{1 - \phi_s(x)}{t}\ d\mu_t(s)
%\int_{-1}^1 \frac{1 - \phi_s(x)}{1-s}\ \frac{1-s}{t}\ d\mu_t(s)
= \int_{-1}^1 \psi_s(x)\ d\nu_t(s).
$$
To see the last equality, observe that both integrals may just as well be taken over the half-open interval $[-1,1[$, since
$$
\frac{1 - \phi_1(x)}{t} = 0 \quad\text{and}\quad \nu_t(\{1\}) = 0.
$$
When $|x| = 1$, we have
$$
\nu_t([-1,1]) = \frac{1- e^{-t\psi(x)}}{t} \nearrow \psi(x) \qquad\text{as } t\to 0.
$$
Hence $\nu_t$ lies in a bounded subset of $M^+([-1,1])$, the space of positive Radon measures on $[-1,1]$. By compactness there is a subnet $(\nu_{t_\alpha})$ that converges vaguely to, say $\nu$. Now,
$$
\int_{-1}^1 \psi_s(x)\ d\nu = \lim_\alpha \int_{-1}^1 \psi_s(x)\ d\nu_{t_\alpha} = \lim_\alpha \frac{1 - e^{-t_\alpha\psi(x)}}{t_\alpha} = \lim_{t\to 0} \frac{1- e^{-t\psi(x)}}{t} = \psi(x)
$$
as desired.

Uniqueness of $\nu$ follows as usual, since $\dot\psi_s(n)$ is a polynomial in $s$ of degree $n-1$.
\end{proof}

\section{Linear bound}\label{sec:linearbound}

\noindent In this section we apply the integral representation in Theorem~\ref{thm:integral-rep-psi} to give a bound on the growth of conditionally negative definite, radial functions on the free groups. The result is contained in Corollary~\ref{cor:linear-bound-psi}.

The following result about the Chebyshev polynomials will be relevant to us.
\begin{lem}
Let $P$ be a Chebyshev polynomial of either kind, and let $x_0 > 1$. For any $x\in\R$ such that $|x| < x_0$ we have
\begin{align}\label{eq:secant}
\frac{P(x_0) - P(x)}{x_0 - x} \leq P'(x_0).
\end{align}
\end{lem}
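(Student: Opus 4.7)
The plan is to reduce the secant-slope inequality to the monotonicity bound
\[
\max_{|y| \le x_0} P'(y) \;=\; P'(x_0).
\]
Since $|x| < x_0$ we have $x < x_0$, so the Mean Value Theorem gives some $\xi \in (x, x_0) \subset (-x_0, x_0)$ with $(P(x_0) - P(x))/(x_0 - x) = P'(\xi)$, and the lemma follows from the above display. I would prove it by splitting $[-x_0, x_0]$ into the three pieces $[1, x_0]$, $[-1, 1]$, and $[-x_0, -1]$.

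On $[1, x_0]$ I show $P'$ is increasing. For $P \in \{T_n, U_n\}$ of degree $n \ge 2$, $P$ has $n$ simple roots in $(-1, 1)$, so two applications of Rolle's theorem place all $n-2$ roots of $P''$ in $(-1, 1)$; together with the positive leading coefficient of $P''$ this gives $P''(y) > 0$ for $y \ge 1$, hence $P'$ is strictly increasing on $[1, \infty)$. The cases $n \le 1$ are trivial.

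On $[-1, 1]$ I show $P'(y) \le P'(1)$, which combined with the previous paragraph yields $P'(y) \le P'(x_0)$. For $P = T_n$ this is immediate from $T_n'(y) = n U_{n-1}(y)$ together with the standard bound $U_m(y) \le U_m(1) = m+1$ on $[-1, 1]$ (verifiable by induction on $m$ using $|\sin(m\theta)| \le m|\sin\theta|$ after writing $y = \cos\theta$). For $P = U_n$ one extra step is needed: combining $2T_n = U_n - U_{n-2}$ with $T_n' = n U_{n-1}$ gives the recursion $U_n' = U_{n-2}' + 2n U_{n-1}$, and iterating produces the expansion
\[
U_n'(y) \;=\; \sum_{k=0}^{\lfloor (n-1)/2 \rfloor} 2(n - 2k)\, U_{n-1-2k}(y),
\]
in which all coefficients are positive. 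Applying $U_m(y) \le U_m(1)$ termwise then yields $U_n'(y) \le U_n'(1)$.

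Finally, on $[-x_0, -1]$ I use the parity relation $P'(-y) = (-1)^{n-1} P'(y)$: if $n$ is odd, then $P'$ is even and the bound on $[1, x_0]$ transfers directly, while if $n$ is even, then $P'(y) \le 0$ on $[-x_0, -1]$ and is in particular at most the positive quantity $P'(x_0)$. The main obstacle I anticipate is the bound on $[-1, 1]$ for $P = U_n$; the positivity of the coefficients in the iterated expansion of $U_n'$ is exactly what makes the termwise argument work, in contrast to the essentially tautological $T_n$ case.
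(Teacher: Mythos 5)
Your proof is correct, and its skeleton is the same as the paper's: both reduce, via the Mean Value Theorem, to showing $P'(y)\le P'(x_0)$ for $|y|<x_0$, and both establish this from three facts --- $P'\le P'(1)$ on $[-1,1]$, $P'$ increasing on $[1,\infty[$, and the parity of $P'$. The difference is how those facts are obtained. The paper simply quotes Szeg\H{o}: Chebyshev polynomials are Jacobi polynomials, the derivative of a Jacobi polynomial is again one up to a positive factor, such polynomials attain their maximum over $[-1,1]$ at the endpoint $1$, and they are increasing on $[1,\infty[$. You instead give self-contained elementary proofs: Rolle's theorem locates all roots of $P''$ in $]-1,1[$, whence $P''>0$ and $P'$ is increasing on $[1,\infty[$; the endpoint bound on $[-1,1]$ comes from $T_n'=nU_{n-1}$ with $|U_m|\le m+1$ there, and for $U_n'$ from the identity $U_n'=U_{n-2}'+2nU_{n-1}$, iterated to an expansion of $U_n'$ in the $U_j$ with positive coefficients (which is a correct identity, and the positivity is exactly what the termwise estimate needs). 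The paper's route is shorter and generalizes verbatim to Jacobi polynomials; yours avoids any appeal to the classical literature and keeps the whole lemma at the level of Chebyshev recursions. One small point to make explicit in the even-degree case on $[-x_0,-1]$: the assertion $P'\le 0$ there uses that $P'$ is odd together with $P'\ge 0$ on $[1,x_0]$, the latter following from $P'(1)>0$ (e.g.\ $T_n'(1)=n^2$, and $U_n'(1)>0$ from your positive expansion) and the monotonicity you already proved; the same remark justifies calling $P'(x_0)$ positive. This is a one-line fill, not a gap.
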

\begin{proof}
A glance at the mean value theorem shows that it suffices to prove that $P'(x) \leq P'(x_0)$ for every $x$ with $|x| < x_0$. Recall that Chebyshev polynomials are Jacobi polynomials, and since $P$ is a Jacobi polynomial, so is $P'$ up to a positive scaling factor (see (4.21.7) in \cite{MR0310533}). It is known (see p. 168 in \cite{MR0310533}) that $P'(x)$ restricted to the interval $[-1,1]$ attains its maximum at the end-point $x = 1$. We know that $P'$ is either even or odd. Finally, it is well-known that Jacobi polynomials are increasing on $[1,\infty[$, so $P'(x) \leq P'(x_0)$, when $x\in\ ]-x_0,x_0[$.
\end{proof}

\begin{lem}\label{lem:s1}
Let $2\leq r\leq \infty$. For each $s\in[-1,1]$ and $x\in\F_r$ we have
$$
\psi_s(x) \leq \psi_1(x).
$$
\end{lem}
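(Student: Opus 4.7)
The plan is to reduce the claim to a polynomial inequality on the radial profile and then read off the reduction from the Chebyshev lemma proved just above. Since $\psi_s(x) = \dot\psi_s(|x|)$, it suffices to show $\dot\psi_s(n) \leq \dot\psi_1(n)$ for every $n \in \N_0$ and every $s \in [-1,1]$. In the finite-$r$ setting, the recurrence \eqref{eq:recurrence-phi} with $s = 1$ gives $P_n(1) = 1$ for all $n$ by an immediate induction, so
$$
\dot\psi_s(n) \;=\; \frac{P_n(1) - P_n(s)}{1-s} \quad (s \neq 1), \qquad \dot\psi_1(n) \;=\; P_n'(1),
$$
and the claim becomes the secant-vs.-endpoint-derivative inequality
$$
\frac{P_n(1) - P_n(s)}{1-s} \;\leq\; P_n'(1), \qquad -1 \leq s < 1.
$$

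To prove this, I would set $x_0 = \frac{q+1}{2\sqrt{q}}$, which satisfies $x_0 > 1$ since $(q-1)^2 > 0$. Formula \eqref{eq:pos-spherical} reads $P_n(s) = q^{-n/2}\bigl[\alpha\, T_n(x_0 s) + \beta\, U_n(x_0 s)\bigr]$ with $\alpha = \tfrac{2}{q+1} > 0$ and $\beta = \tfrac{q-1}{q+1} \geq 0$. For $s \in (-1,1)$ the point $x_0 s$ satisfies $|x_0 s| < x_0$, so the preceding lemma applies to both $T_n$ and $U_n$ at $x_0$ and gives
$$
\frac{T_n(x_0) - T_n(x_0 s)}{1-s} \;\leq\; x_0\, T_n'(x_0), \qquad \frac{U_n(x_0) - U_n(x_0 s)}{1-s} \;\leq\; x_0\, U_n'(x_0).
$$
Taking the non-negative linear combination with weights $q^{-n/2}\alpha$ and $q^{-n/2}\beta$ yields
$$
\frac{P_n(1)-P_n(s)}{1-s} \;\leq\; x_0\, q^{-n/2}\bigl[\alpha\, T_n'(x_0) + \beta\, U_n'(x_0)\bigr] \;=\; P_n'(1),
$$
where the final equality is obtained by differentiating \eqref{eq:pos-spherical} at $s=1$ (the chain rule produces the factor $x_0$). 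The boundary case $s=-1$ then follows by continuity of $s \mapsto \dot\psi_s(n)$.

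The case $r = \infty$ is handled directly: from \eqref{eq:recurrence-phi-infty}, $\dot\phi_{s,\infty}(n) = s^n$, so $\dot\psi_s(n) = 1 + s + s^2 + \cdots + s^{n-1} \leq n = \dot\psi_1(n)$ for every $s \in [-1,1]$, since each summand is at most $1$.

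There is no serious obstacle here; the preceding Chebyshev lemma was designed precisely to power this estimate. The only care needed is verifying that the coefficients $\alpha,\beta$ in \eqref{eq:pos-spherical} are non-negative (which holds because $q = 2r-1 \geq 3$) so that the two Chebyshev inequalities combine in the correct direction, and matching the right-hand side of the combined inequality with $P_n'(1)$ via the chain rule.
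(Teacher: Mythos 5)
Your proposal is correct and is essentially the paper's own argument: write $P_n$ as a nonnegative combination of $T_n$ and $U_n$ evaluated at $\tfrac{q+1}{2\sqrt q}\,s$, apply the preceding secant-versus-endpoint-derivative lemma at $s_0=\tfrac{q+1}{2\sqrt q}>1$ to each Chebyshev polynomial, and handle $r=\infty$ directly via the geometric sum $1+s+\cdots+s^{n-1}\le n$. Your explicit bookkeeping of the chain-rule factor $x_0$ and the continuity argument at $s=-1$ are just slightly more detailed renderings of the same reduction.
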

\begin{proof}
We may of course assume that $s\neq 1$. If $r = \infty$, the result is obvious, since then $\psi_s(x) = 1 + s + s^2 + \cdots s^{|x|-1}$. So we suppose $2\leq r < \infty$. Recall that then $\dot\phi_s(n) = P_n(s)$, where $P_n$ has the form given in \eqref{eq:pos-spherical}, and that
$$
\dot\psi_s(n) = \frac{1 - P_n(s)}{1-s}.
$$
Thus, we must prove that the slope of the secant line of $P_n$ through the points $(1,P_n(1))$ and $(s,P_n(s))$ is bounded above by the slope of the tangent line of $P_n$ at $1$. To this end, let
$$
t_n(s) = T_n \left( \frac{q+1}{2\sqrt{q}}\ s \right) q^{-n/2}, \quad u_n(s) = U_n \left( \frac{q+1}{2\sqrt{q}}\ s \right) q^{-n/2},
$$
and put $\lambda = 2/(q+1)$, so that
$$
P_n(s) = \lambda t_n(s) + (1-\lambda) u_n(s).
$$
It suffices to show that 
$$
\frac{ t_n(1) - t_n(s)}{1-s} \leq t_n'(s) \quad\text{and}\quad \frac{ u_n(1) - u_n(s)}{1-s} \leq u_n'(s),
$$
when $-1 \leq s < 1$. This, in turn, is equivalent to showing
$$%\begin{align}\label{eq:secant}
\frac{ T_n(s_0) - T_n(s)}{s_0-s} \leq T_n'(s_0) \quad\text{and}\quad \frac{ U_n(s_0) - U_n(s)}{s_0-s} \leq U_n'(s_0),
$$%\end{align}
when $- s_0 \leq s < s_0$ and $s_0 = \frac{q+1}{2\sqrt q}$. An application of the previous lemma now completes our proof, since $s_0 > 1$.
\end{proof}

Recall the following relations concerning hyperbolic functions and Chebyshev polynomials.

\begin{align}\label{eq:chebyshev-hyperbolic}
T_n(\cosh \alpha) = \cosh(n\alpha), \quad U_n(\cosh\alpha) = \frac{\sinh((n+1)\alpha)}{\sinh\alpha}
\end{align}
for all $\alpha \neq 0$ and $n\in\N_0$.

\begin{lem}\label{lem:explicit}
Let $2\leq r < \infty$. The map $\psi_1$ has the following form.
$$
\dot\psi_1(n) = n \frac{q+1}{q-1} - \frac{2q(1-q^{-n})}{(q-1)^2} , \qquad n\in\N_0,
$$
where as usual $q = 2r -1$.

\end{lem}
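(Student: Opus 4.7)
The plan is to recognize that the claimed identity amounts to computing $P_n'(1)$ and then solving a first-order linear inhomogeneous recurrence obtained by differentiating the defining recurrence \eqref{eq:recurrence-pol} at $s=1$.

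First I would note that the constant function $1$ is the spherical function with eigenvalue $1$: from \eqref{eq:recurrence-pol} one checks by induction that $P_n(1)=1$ for every $n\in\N_0$ (indeed $\frac{q+1}{q}\cdot 1-\frac{1}{q}\cdot 1=1$). Consequently the L'Hôpital-type limit defining $\psi_1$ gives
\[
\dot\psi_1(n)\;=\;\lim_{s\to 1}\frac{1-P_n(s)}{1-s}\;=\;P_n'(1),
\]
so the problem is reduced to computing $a_n:=P_n'(1)$.

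Next I would differentiate the recurrence $q P_{n+1}(s)=(q+1)sP_n(s)-P_{n-1}(s)$ and evaluate at $s=1$, using $P_n(1)=1$. This yields
\[
q\,a_{n+1}=(q+1)a_n-a_{n-1}+(q+1),\qquad n\geq 1,
\]
with initial data $a_0=0$ (since $P_0\equiv 1$) and $a_1=1$ (since $P_1(s)=s$). The associated homogeneous recurrence $a_{n+1}-\tfrac{q+1}{q}a_n+\tfrac{1}{q}a_{n-1}=0$ has characteristic polynomial $qx^2-(q+1)x+1=(x-1)(qx-1)$, with roots $1$ and $1/q$; thus the homogeneous solutions are $1$ and $q^{-n}$.

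Finally I would find a particular solution. Since $1$ is already a root of the characteristic polynomial, I try a linear ansatz $a_n=Cn$. A straightforward substitution shows the linear Ansatz produces $C\cdot\tfrac{q-1}{q}$ on the left-hand side, so the matching $C\tfrac{q-1}{q}=\tfrac{q+1}{q}$ forces $C=\tfrac{q+1}{q-1}$. Writing the general solution as
\[
a_n=\frac{q+1}{q-1}\,n+c_1+c_2\,q^{-n},
\]
the two boundary conditions $a_0=0$ and $a_1=1$ give the linear system $c_1+c_2=0$ and $c_1(1-1/q)=1-\tfrac{q+1}{q-1}=-\tfrac{2}{q-1}$, whence $c_1=-c_2=-\tfrac{2q}{(q-1)^2}$. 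Substituting and rearranging yields exactly
\[
\dot\psi_1(n)=a_n=n\,\frac{q+1}{q-1}-\frac{2q(1-q^{-n})}{(q-1)^2},
\]
which is the claimed formula. No step presents a real obstacle; the only minor subtlety is noticing that the ansatz must be linear (not constant) because $1$ is a root of the characteristic polynomial, but this is standard.
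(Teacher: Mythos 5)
Your proof is correct, but it takes a genuinely different route from the paper. You differentiate the three-term recurrence \eqref{eq:recurrence-pol} at $s=1$, using $P_n(1)=1$, to get the inhomogeneous constant-coefficient recurrence $qa_{n+1}=(q+1)a_n-a_{n-1}+(q+1)$ for $a_n=P_n'(1)$, whose characteristic roots $1$ and $1/q$ together with the particular solution $\tfrac{q+1}{q-1}n$ and the initial values $a_0=0$, $a_1=1$ yield the stated formula; all the individual computations (the factorization $qx^2-(q+1)x+1=(x-1)(qx-1)$, the value $C=\tfrac{q+1}{q-1}$, and the constants $c_1=-c_2=-\tfrac{2q}{(q-1)^2}$) check out. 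The paper instead starts from the closed Chebyshev expression \eqref{eq:pos-spherical}, substitutes $\tfrac{q+1}{2\sqrt q}=\cosh\alpha$ with $\alpha=\tfrac12\log q$, and computes $P_n'(1)$ via the derivative formulas $T_n'=nU_{n-1}$ and $U_n'(x)=\tfrac{(n+1)T_{n+1}(x)-xU_n(x)}{x^2-1}$ together with the hyperbolic identities \eqref{eq:chebyshev-hyperbolic}, followed by some algebraic reduction. Your approach is more elementary and self-contained: it bypasses the Chebyshev/hyperbolic machinery entirely and replaces the ``after some reduction'' steps by solving a standard linear recurrence, the only care needed being the linear (rather than constant) ansatz because $1$ is a characteristic root, which you correctly address. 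The paper's route has the advantage of staying within the Chebyshev representation that it also exploits elsewhere (e.g.\ in the secant-slope lemma preceding Lemma~\ref{lem:s1}), but as a proof of this particular lemma your argument is equally rigorous and arguably cleaner.
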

\begin{proof}
If we let $\alpha = \frac12\log q$, then notice that
$$
\frac{q+1}{2\sqrt q} = \cosh\alpha, \quad \frac{q-1}{q+1} = \tanh\alpha  \quad\text{and}\quad \frac{2}{q+1} = \frac{e^{-\alpha}}{\cosh\alpha}.
$$
Then \eqref{eq:pos-spherical} takes the form
$$
P_n(s) = \left[ \frac{e^{-\alpha}}{\cosh\alpha} T_n(\cosh(\alpha)s) + \tanh(\alpha) U_n(\cosh(\alpha)s) \right] e^{-\alpha n}.
$$
Recall that $\dot\psi_1(n) = P'_n(1)$. Using the well-known facts that
$$
T'_n(x) = nU_{n-1}(x), \qquad U'_n(x) = \frac{(n+1)T_{n+1}(x) - xU_n(x)}{x^2-1},
$$
we find the following expression for $P'_n(1)$.
\small
$$
P'_n(1) = \left[ e^{-\alpha} nU_{n-1}( \cosh\alpha ) + \sinh\alpha\ \frac{(n+1)T_{n+1}(\cosh\alpha) - \cosh\alpha U_n(\cosh\alpha)}{\cosh^2\alpha - 1}       \right] e^{-\alpha n}.
$$
\normalsize
Using \eqref{eq:chebyshev-hyperbolic} we arrive after some reduction at the expression
\small
$$
\dot\psi_1(n) = \frac{e^{-\alpha n}}{\sinh\alpha} \left( n e^{-\alpha} \sinh (n\alpha) + (n+1)\cosh((n+1)\alpha) - \coth(\alpha) \sinh((n+1)\alpha) \right).
$$
\normalsize
Rewriting in terms of $q$ gives, again after some reduction,
$$
\dot\psi_1(n) = n \frac{q+1}{q-1} - \frac{2q(1-q^{-n})}{(q-1)^2}.
$$

\end{proof}

\begin{prop}\label{prop:linear-bound-psi}
Let $2\leq r \leq \infty$. There exists a constant $a\geq 0$ such that
$$
\psi_s(x) \leq a|x|
$$
for every $x\in\F_r$ and $s\in[-1,1]$. In fact, we may take $a = \frac{r}{r-1}$, when $r$ is finite, and we may take $a = 1$ when $r = \infty$.
\end{prop}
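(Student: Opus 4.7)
The plan is to reduce the estimate to a bound on $\psi_1$ alone, using the monotonicity from Lemma~\ref{lem:s1}. Indeed, once we know $\psi_s(x) \leq \psi_1(x)$ for every $s \in [-1,1]$, it is enough to establish $\psi_1(x) \leq a|x|$ with the claimed constant $a$. Both cases $r < \infty$ and $r = \infty$ then reduce to inspecting a closed-form expression for $\dot\psi_1(n)$.

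For $r = \infty$, the recurrence \eqref{eq:recurrence-phi-infty} gives $\dot\phi_s(n) = s^n$, so for $s \neq 1$
\[
\dot\psi_s(n) = \frac{1 - s^n}{1-s} = 1 + s + s^2 + \cdots + s^{n-1},
\]
and taking $s \to 1$ yields $\dot\psi_1(n) = n$. Combined with Lemma~\ref{lem:s1}, this gives $\psi_s(x) \leq |x|$, so $a = 1$ works in the infinite case.

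For $2 \leq r < \infty$, I would simply read off the result from Lemma~\ref{lem:explicit}. Since $q = 2r - 1$, we have the identity
\[
\frac{q+1}{q-1} = \frac{2r}{2(r-1)} = \frac{r}{r-1},
\]
and Lemma~\ref{lem:explicit} therefore says
\[
\dot\psi_1(n) = \frac{r}{r-1}\, n - \frac{2q(1 - q^{-n})}{(q-1)^2}.
\]
Because $q > 1$, the subtracted term is non-negative for every $n \in \N_0$, giving the clean bound $\dot\psi_1(n) \leq \frac{r}{r-1}\, n$. Applying Lemma~\ref{lem:s1} once more, for any $s \in [-1,1]$ and any $x \in \F_r$ one obtains $\psi_s(x) \leq \psi_1(x) \leq \frac{r}{r-1}|x|$, and the proposition is proven with $a = r/(r-1)$.

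There is essentially no obstacle here: the work was already done in Lemmas~\ref{lem:s1} and~\ref{lem:explicit}. The only mildly subtle point is realising that the correction term in the formula for $\dot\psi_1(n)$ has the right sign, so that the linear term $n(q+1)/(q-1)$ is not only the leading behaviour but also a uniform upper bound, and that after rewriting $(q+1)/(q-1)$ in terms of $r$ one obtains exactly the stated constant.
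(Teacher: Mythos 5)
Your proof is correct and follows the paper's own argument exactly: reduce to $s=1$ via Lemma~\ref{lem:s1} and then read the bound off Lemma~\ref{lem:explicit} (resp.\ the geometric series when $r=\infty$). The only difference is that you spell out the sign check on the correction term $\frac{2q(1-q^{-n})}{(q-1)^2}$, which the paper leaves implicit.
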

\begin{proof}
By Lemma~\ref{lem:s1} it suffices to find $a\geq 0$ such that $\psi_1(x) \leq a|x|$ for every $x\in\F_r$. It follows from Lemma~\ref{lem:explicit} that we may take $a = \frac{q+1}{q-1} = \frac{r}{r-1}$, when $r < \infty$. When $r = \infty$ we have $\psi_1(x) = |x|$, so $a = 1$ clearly works.
\end{proof}

As a corollary of Theorem~\ref{thm:integral-rep-psi} and the previous proposition we obtain the following linear bound on conditionally negative definite, radial maps.

\begin{cor}\label{cor:linear-bound-psi}
Let $2\leq r \leq \infty$, and let $\psi:\F_r\to\R$ be a radial function. Assume $\psi$ is conditionally negative definite with $\psi(e) = 0$. Then there exists a constant $c\geq 0$ such that $\psi(x) \leq c|x|$ for every $x\in\F_r$.
\end{cor}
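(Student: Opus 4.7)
The plan is to combine the integral representation from Theorem~\ref{thm:integral-rep-psi} with the uniform bound from Proposition~\ref{prop:linear-bound-psi}, which is essentially a one-line argument once both tools are in hand.

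First I would apply Theorem~\ref{thm:integral-rep-psi} to the given function $\psi$. This yields a finite positive Borel measure $\nu$ on $[-1,1]$ such that
$$
\psi(x) = \int_{-1}^1 \psi_s(x)\, d\nu(s), \qquad x\in\F_r,
$$
and moreover $\nu([-1,1]) = \psi(x_0)$ for any fixed $x_0\in\F_r$ with $|x_0|=1$ (such an $x_0$ exists since $r\geq 2$).

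Next I would invoke Proposition~\ref{prop:linear-bound-psi} to get a constant $a\geq 0$ (independent of $s$) such that $\psi_s(x)\leq a|x|$ for all $s\in[-1,1]$ and $x\in\F_r$. Substituting this pointwise bound into the integral representation and pulling out the factor $a|x|$ (which does not depend on $s$), I obtain
$$
\psi(x) = \int_{-1}^1 \psi_s(x)\, d\nu(s) \leq a|x|\int_{-1}^1 d\nu(s) = a|x|\,\nu([-1,1]).
$$
Setting $c = a\,\nu([-1,1]) = a\,\psi(x_0)$ completes the proof. Explicitly, one may take $c = \frac{r}{r-1}\psi(x_0)$ when $r$ is finite and $c = \psi(x_0)$ when $r=\infty$.

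There is no real obstacle here: both ingredients have already been established, and the only thing to check is that $\psi_s$ is nonnegative (so that the pointwise bound, which is a statement about real numbers, is preserved under integration against the positive measure $\nu$), but this was observed just before Theorem~\ref{thm:integral-rep-psi}. The dependence of $c$ on $\psi$ is only through its value on a single generator, which matches what one expects from the analogous classical L\'evy-Khinchin estimates on $\Z$.
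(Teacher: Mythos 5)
Your proof is correct and follows essentially the same route as the paper: apply the integral representation of Theorem~\ref{thm:integral-rep-psi}, bound the integrand by $a|x|$ using Proposition~\ref{prop:linear-bound-psi}, and use the total mass $\nu([-1,1]) = \dot\psi(1)$ to get $c = a\,\dot\psi(1)$. The paper's own proof is just a terser version of the same argument.
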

\begin{proof}
Since $\psi$ has an integral representation as in Theorem~\ref{thm:integral-rep-psi}, it follows from Proposition~\ref{prop:linear-bound-psi} that there is a constant $c \geq 0$ such that $\psi(x) \leq c|x|$ for every $x\in\F_r$. In fact, we may take $c = \dot\psi(1) a$, where $a$ is the constant in Proposition~\ref{prop:linear-bound-psi}.
\end{proof}

\begin{rem}
In Corollary~\ref{cor:linear-bound-psi} the restriction to radial maps is essential. For instance, on the free group $\F_\infty$ there exist conditionally negative definite maps that are unbounded on the set of generators, so clearly they do not admit a bound as in Corollary~\ref{cor:linear-bound-psi}. We conclude by giving such an example.

Let $\psi:\Z\to\R$ be given by $\psi(n) = n^2$. If $\sum_j c_j = 0$, then
$$
\sum_{j,k=1}^n c_j \bar{c_k} (x_j - x_k)^2 = - 2 \left| \sum_{j=1}^n c_j x_j \right|^2 \leq 0, \qquad x_1,\ldots,x_n\in\R,
$$
so $\psi$ is conditionally negative defintie on $\Z$. Let $\{b_n\}_n$ be the generators of $\F_r$, and let $\rho:\F_r\to\Z$ be the homomorphism given by $\rho(b_n) = n$. The composition $\psi\circ\rho$ is then conditionally negative definite on $\F_r$, and
$$
(\psi\circ\rho)(b_n) = n^2
$$
Clearly, $\psi\circ\rho$ is not radial. It is also obvious that there is no $c\geq 0$ such that $(\psi\circ\rho)(x) \leq c |x|$ for every $x\in\F_r$, because $(\psi\circ\rho)(b_1^n) = n^2$ for all $n\in\N$, while $|b_1^n| = n$. If $r = \infty$, then $\psi\circ\rho$ is even unbounded on the set of generators.
\end{rem}

\section*{Acknowledgements}
We thank Marek Bo{\.z}ejko for providing us with several references to the literature. We also thank Kristian K. Olesen for useful comments.

%\bibliographystyle{alpha}
%\bibliographystyle{abbrv}
%\bibliographystyle{plain}
%\bibliography{knudbybib}

\begin{thebibliography}{10}

\bibitem{MR563948}
Jean-Pierre Arnaud.
\newblock Fonctions sph\'eriques et fonctions d\'efinies positives sur l'arbre
  homog\`ene.
\newblock {\em C. R. Acad. Sci. Paris S\'er. A-B}, 290(2):A99--A101, 1980.

\bibitem{BCR}
Christian Berg, Jens Peter~Reus Christensen, and Paul Ressel.
\newblock {\em Harmonic analysis on semigroups}, volume 100 of {\em Graduate
  Texts in Mathematics}.
\newblock Springer-Verlag, New York, 1984.
\newblock Theory of positive definite and related functions.

\bibitem{MR0481057}
Christian Berg and Gunnar Forst.
\newblock {\em Potential theory on locally compact abelian groups}.
\newblock Springer-Verlag, New York, 1975.
\newblock Ergebnisse der Mathematik und ihrer Grenzgebiete, Band 87.

\bibitem{heidelberg}
Marek Bo{\.z}ejko.
\newblock Positive and negative definite kernels on discrete groups.
\newblock Lectures at Heidelberg University, 1987.

\bibitem{BO}
Nathanial~P. Brown and Narutaka Ozawa.
\newblock {\em {$C^*$}-algebras and finite-dimensional approximations},
  volume~88 of {\em Graduate Studies in Mathematics}.
\newblock American Mathematical Society, Providence, RI, 2008.

\bibitem{MR0338272}
P.~Cartier.
\newblock Harmonic analysis on trees.
\newblock In {\em Harmonic analysis on homogeneous spaces ({P}roc. {S}ympos.
  {P}ure {M}ath., {V}ol. {XXVI}, {W}illiams {C}oll., {W}illiamstown, {M}ass.,
  1972)}, pages 419--424. Amer. Math. Soc., Providence, R.I., 1973.

\bibitem{MR658507}
Joel~M. Cohen and Leonede De-Michele.
\newblock The radial {F}ourier-{S}tieltjes algebra of free groups.
\newblock In {\em Operator algebras and {$K$}-theory ({S}an {F}rancisco,
  {C}alif., 1981)}, volume~10 of {\em Contemp. Math.}, pages 33--40. Amer.
  Math. Soc., Providence, R.I., 1982.

\bibitem{MR1152801}
Alessandro Fig{\`a}-Talamanca and Claudio Nebbia.
\newblock {\em Harmonic analysis and representation theory for groups acting on
  homogeneous trees}, volume 162 of {\em London Mathematical Society Lecture
  Note Series}.
\newblock Cambridge University Press, Cambridge, 1991.

\bibitem{MR665019}
Alessandro Fig{\`a}-Talamanca and Massimo~A. Picardello.
\newblock Spherical functions and harmonic analysis on free groups.
\newblock {\em J. Funct. Anal.}, 47(3):281--304, 1982.

\bibitem{HSS}
U.~Haagerup, T.~Steenstrup, and R.~Szwarc.
\newblock Schur multipliers and spherical functions on homogeneous trees.
\newblock {\em Internat. J. Math.}, 21(10):1337--1382, 2010.

\bibitem{haagerup-example}
Uffe Haagerup.
\newblock An example of a nonnuclear {$C^{\ast} $}-algebra, which has the
  metric approximation property.
\newblock {\em Invent. Math.}, 50(3):279--293, 1978/79.

\bibitem{K-semigroups}
S{\o}ren Knudby.
\newblock Semigroups of {H}erz-{S}chur multipliers.
\newblock Submitted, \newline
\newblock \href{http://arxiv.org/pdf/1303.6780}{arXiv:1303.6780}, 2013.

\bibitem{MR0310533}
G{\'a}bor Szeg{\H{o}}.
\newblock {\em Orthogonal polynomials}.
\newblock American Mathematical Society, Providence, R.I., third edition, 1967.
\newblock American Mathematical Society Colloquium Publications, Vol. 23.

\end{thebibliography}

\end{document}